\DeclareMathOperator{\im}{im}
\DeclareMathOperator{\Pic}{Pic}
\DeclareMathOperator{\Spec}{Spec}
\DeclareMathOperator{\CH}{CH}
\DeclareMathOperator{\fieldchar}{char}
\DeclareMathOperator{\id}{id}
\newcommand{\mg}[1]{\mathcal{M}_{#1}}
\newcommand{\mgbar}[1]{\overline{\mathcal{M}}_{#1}}
\newcommand{\mgtilde}[1]{\widetilde{\mathcal{M}}_{#1}}
\newtheorem{theorem}{Theorem}[section]
\newtheorem{lemma}[theorem]{Lemma}
\newtheorem{proposition}[theorem]{Proposition}
\newtheorem{corollary}[theorem]{Corollary}
\theoremstyle{definition}
\newtheorem{definition}[theorem]{Definition}
\newtheorem{note}[theorem]{Note}
\newtheorem{observation}[theorem]{Observation}
\title{The integral Chow ring of $\mg{1,n}$ for $n=3,\dots,10$}
\author{Martin Bishop}
\date{19 November 2023}
\begin{document}
\maketitle

\begin{abstract}
We compute the integral Chow ring of the moduli stack of smooth elliptic curves with $n$ marked points
for $3\leq n\leq 10$.
\end{abstract}

\section{Introduction}
\subsection{Contents}
The main result of this paper is the following:

\begin{theorem}\label{main result}
	Let $\lambda_1$ be the first Chern class of the Hodge bundle. Then over a field
	of characteristic not equal to 2 or 3:
	\begin{enumerate}[label=(\alph*)]
		\item $\CH(\mg{1,3})=\mathbb Z[\lambda_1]/(12\lambda_1,6\lambda_1^2)$\label{main result a}
		\item $\CH(\mg{1,4})=\mathbb Z[\lambda_1]/(12\lambda_1,2\lambda_1^2)$
		\item $\CH(\mg{1,n})=\mathbb Z[\lambda_1]/(12\lambda_1,\lambda_1^2)$, for $n=5,\dots,10$.
	\end{enumerate}
\end{theorem}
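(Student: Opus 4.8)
The plan is to induct on $n$, computing $\CH(\mg{1,n})$ from $\CH(\mg{1,n-1})$ via the forgetful morphism $\pi_n\colon\mg{1,n}\to\mg{1,n-1}$ that drops the last marked point. Its source is exactly the universal curve $\mathcal C\to\mg{1,n-1}$ with the $n-1$ tautological sections $\sigma_1,\dots,\sigma_{n-1}$ deleted, so I would first describe $\CH(\mathcal C)$ and then pass to the open complement by excision. For the base of the induction I would recall the presentation $\mg{1,1}=[(\mathbb A^2\setminus\{\Delta=0\})/\mathbb G_m]$ coming from the short Weierstrass equation (available since $\fieldchar\neq 2,3$), where $\mathbb G_m$ acts with weights $4,6$ on $(a_4,a_6)$; since $\Delta$ has weight $12$, deleting the discriminant divisor yields $\CH(\mg{1,1})=\mathbb Z[\lambda_1]/(12\lambda_1)$, with $\lambda_1$ the weight-one (Hodge) generator.

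The geometric input that makes the fibers manageable is that on $\mg{1,n}$ the relative dualizing sheaf is pulled back from the Hodge line bundle (the canonical bundle of an elliptic curve is trivial), so every cotangent class satisfies $\psi_i=\lambda_1$. Consequently each section has self-intersection $\sigma_i^2=-\lambda_1\sigma_i$ (its normal bundle is $\lambda_1^\vee$), and distinct sections are disjoint, $\sigma_i\sigma_j=0$ for $i\neq j$. I would use these to present $\CH(\mathcal C)$ as an algebra over $\CH(\mg{1,n-1})$ generated by the section classes together with the $\Pic^0$-type difference classes $\delta_i=\sigma_i-\sigma_1$, and then invoke excision: since each $\sigma_i$ is a Cartier divisor, the image of its pushforward is the principal ideal it generates, so $\CH(\mg{1,n})=\CH(\mathcal C)/(\sigma_1,\dots,\sigma_{n-1})$. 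Establishing that $\CH(\mathcal C)$ really is generated by these classes, rather than by uncontrolled $\Pic^0$ classes of the fibers, is itself part of the work.

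The subtlety — and the main obstacle — is that the relations above are not enough: forming the quotient by $(\sigma_1,\dots,\sigma_{n-1})$ using only $\sigma_i^2=-\lambda_1\sigma_i$ and $\sigma_i\sigma_j=0$ returns the Chow ring of the base unchanged and never produces the expected torsion on $\lambda_1^2$. The extra relations must come from the group law on the universal elliptic curve — concretely from the theorem of the cube/square, which pins down the classes of the diagonals $\{p_i=p_j\}$ and of the difference (Poincaré) classes in terms of the $\sigma_i$ and $\lambda_1$. It is precisely these relations, once multiplied out and pushed into the $\lambda_1$-part after excision, that deposit a multiple of $\lambda_1^2$ into the ideal generated by the section classes, forcing $6\lambda_1^2=0$ for $n=3$, $2\lambda_1^2=0$ for $n=4$, and $\lambda_1^2=0$ for $n\geq 5$. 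I expect the bookkeeping of these diagonal and Poincaré relations, together with the verification that no new polynomial generator survives the excision (so that the answer remains a quotient of $\mathbb Z[\lambda_1]$), to be the technical heart of the argument.

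Finally I would track how the torsion coefficient shrinks as $n$ grows: each additional marked point supplies one more section to delete and hence one more relation, cutting the order of $\lambda_1^2$ from $12$ down through $6,2$ to $1$, after which $\lambda_1^2$ vanishes and the ring truncates. The hypothesis $\fieldchar\neq 2,3$ enters both through the Weierstrass model and through the extra automorphisms at $j=0$ and $j=1728$, whose orders $6$ and $4$ are reflected in the primes $3$ and $2$ occurring in these coefficients; I would check at each stage that the corresponding special loci are handled correctly and that the stacky (higher-codimension) classes behave exactly as the stated presentation predicts.
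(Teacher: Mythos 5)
Your strategy---inducting along the forgetful map and realizing $\mg{1,n}$ as the universal curve $\mathcal C$ over $\mg{1,n-1}$ minus its tautological sections---is genuinely different from the paper's, which instead stratifies $\mg{1,n}$ by the divisors $p_3=\iota(p_2)$ and $p_4=\iota(p_2+p_3)$, identifies the strata with opens in $\mg{1,n-1}$ or with opens in vector bundles over $B\mu_2$ and $B\mu_3$, and patches using higher Chow groups with $\ell$-adic coefficients. But your plan has a gap exactly where you flag it and then move on: you give no method for computing $\CH(\mathcal C)$ integrally, and that is where the entire difficulty lives. Controlling the ``$\Pic^0$-type'' classes is not bookkeeping: whether $\CH(\mathcal C)$ is generated over $\CH(\mg{1,n-1})$ by section classes is essentially the question of whether the Chow ring is tautological, and the paper needs Belorousski's rationality of $U_n\cap V_n$ (valid only for $n\le 10$) to settle it. If your induction worked as stated it would apply to all $n$, whereas the theorem genuinely stops at $n=10$; the nontautological classes you set aside are precisely what obstructs $n\ge 11$.

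Second, the mechanism you propose for the torsion---theorem-of-the-cube relations ``depositing a multiple of $\lambda_1^2$''---is not substantiated, and the heuristic that each new section removes one more step of torsion does not match the actual pattern $12,6,2,1$ (successive quotients by $2$, then $3$, then $2$). In the paper these primes have concrete sources: $U_3$ is open in a vector bundle over $B\mu_2$ (so $2\lambda_1=0$ there), $U_2'$ is open in a vector bundle over $B\mu_3$ (so $3\lambda_1=0$ there), a nonsplitting argument via higher Chow groups assembles these into $6\lambda_1^2=0$, and the matching lower bounds come from residual gerbes at curves with $\mu_2$- and $\mu_3$-automorphisms together with $\Pic(\mg{1,n})\cong\mathbb Z/12$; for $n\ge 5$ the vanishing $\lambda_1^2=0$ ultimately rests on the automorphism-free locus of $\mg{1,4}$ being a $4$-dimensional variety. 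None of these load-bearing steps---the upper-bound patching, the lower-bound gerbe maps, or the dimension count---appears in your outline, so as written the proposal identifies a plausible formal skeleton but leaves the actual computation unproved.
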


We open by reviewing some essential background:
the Weierstrass form for elliptic curves, the Chow rings of $\mg{1,1}$ and $\mg{1,2}$,
and higher Chow groups with $\ell$-adic coefficients.
We then compute the integral Chow rings of $\mg{1,n}$ for $3\leq n\leq 10$ over
a (not-necessarily algebraically closed) field $k$ with $\fieldchar k\neq 2,3$
by using higher Chow groups with $\ell$-adic coefficients in the base case
$n=3$, and then leveraging this information for larger $n$. This extends
Belorousski's computation of the \textit{rational} Chow ring of these stacks \cite{Bel98}. Along the way,
we also prove the rationality of $\mg{1,n}$ for $3\leq n\leq 10$, which was previously
only known in the case $\mathbbm k=\bar{\mathbbm k}$, $\fieldchar\mathbbm k=0$,
and analyze the notion
of the \textit{integral tautological ring} of $\mg{1,n}$.

\subsection{History}

In \cite{Mum83}, Mumford introduced the study of the intersection theory of the
coarse moduli space of genus $g$
curves, $\overline M_g$. This space is singular and its Chow ring cannot be defined with integer coefficients,
but the singularities are mild enough that it can be defined with
rational coefficients (the \textit{rational Chow ring}). Extending this notion, the rational Chow rings
of the
moduli stacks of genus $g$ stable (resp. smooth) $n$-pointed curves,
denoted $\mgbar{g,n}$ (resp.
$\mg{g,n}$), have been computed for many $(g,n)$ \cite{Bel98, CL21, Fab90a, Fab90b, Iza95, Mum83, PV15}.

However, using rational coefficients eliminates all torsion, and so ignores a rich part of the structure of
the space. Enabled by the extension of the definition of integral Chow rings to quotient stacks
by Totaro \cite{Tot99} and Edidin-Graham \cite{EG98}, Vistoli and Edidin-Graham
computed the integral Chow rings of $\mg{2}$ \cite{Vis98}, $\mg{1,1}$, and $\mgbar{1,1}$
\cite{EG98}. Then progress froze until the recent development
of new techniques for computing with integral coefficients, such as the patching lemma of
\cite{DLV21} and the \textit{higher Chow groups with $\ell$-adic coefficients} of \cite{Lar21}.
See the below table for a list of currently known values.

\begin{figure}[H]
\begin{center}
\renewcommand{\arraystretch}{1.3}
\begin{tabular}{| c | c | c |}
\hline
genus & moduli & reference\\
\hline
\multirow{2}{*}{$g=0$}
	& $\mg{0,n}$, $n\geq3$& classical\\
	\cline{2-3}
	& $\mgbar{0,n}$, $n\geq3$ & \cite{Kee92}\\
	\hline
\multirow{5}{*}{$g=1$}
	& $\mg{1,1}$ &
		\multirow{2}{*}{\cite{EG98}}\\
	\cline{2-2}
	& $\mgbar{1,1}$ &\\
	\cline{2-3}
	& $\mg{1,2}$ & \cite{Inc21}\\
	\cline{2-3}
	& $\mgbar{1,2}$ & \cite{DLPV21, Inc21}\\
	\cline{2-3}
	& $\mg{1,n}$, $3\leq n\leq10$ & [--]\\
\hline
\multirow{3}{*}{$g=2$}
	& $\mg{2}$ & \cite{Vis98}\\
	\cline{2-3}
	& $\mgbar{2}$ & \cite{Lar21}\\
	\cline{2-3}
	& $\mgbar{2,1}$ & \cite{DLV21}\\
\hline
\end{tabular}
\caption{All currently known integral Chow rings of $\mg{g,n}$ and $\mgbar{g,n}$.}
\end{center}
\end{figure}

\subsection{The patching problem}
One powerful tool for computing Chow rings is the \textit{excision exact sequence}. Given a closed
substack $p:Z\rightarrow X$ with complement $U$, there is an exact sequence
$$
\CH(Z)\xrightarrow{p_*}\CH(X)\rightarrow\CH(U)\rightarrow0.
$$
This sequence allows one to compute the Chow ring of an open locus when the Chow ring of its complement
and of the whole space are known. However, we frequently find ourselves in the opposite
situation: when dealing with complicated objects stratified by simpler ones, we may able to compute
the Chow rings of $Z$ and its complement $U$, and need to patch these together to get the Chow
ring of $X$.

This may be referred to as the \textit{patching problem}, and solving it is the crux of many Chow computations.
The above-mentioned new techniques, the patching lemma and higher Chow groups with $\ell$-adic
coefficients, give methods for solving the patching problem and have fueled
the recent explosion in progress in computing integral Chow rings.

\subsection{Conventions}
For the remainder of this paper, all schemes and stacks are over a fixed field $\mathbbm k$ of
characteristic not equal to 2 or 3.

\subsection{Acknowledgements}
I would like to thank Jarod Alper, Catherine Babecki, Andrea Di Lorenzo, Kristine Hampton,
Giovanni Inchiostro, Eric Larson,
Max Lieblich, Jessie Loucks-Tavitas, Brian Nugent, Juan Salinas, and Alex Scheffelin for the countless
helpful conversations which occurred during this project.

\section{The $\mg{1,1}$ and $\mg{1,2}$ cases}
Our analysis of $\mg{1,n}$ for higher $n$ depends in multiple places on $\mg{1,1}$
and $\mg{1,2}$, so we first review their structure, which
is essentially a corollary of the Weierstrass form for elliptic curves.
The Chow ring of $\mg{1,1}$ was first computed in
\cite{EG98} and $\mg{1,2}$ in \cite{Inc21}.

\subsection{The Weierstrass form}
We open with the classically known Weierstrass form for elliptic curves.

\begin{theorem}[Weierstrass]
Any one-pointed smooth elliptic curve over a field $\mathbbm k$ of characteristic not equal to 2 or 3
can be written in the form $y^2z=x^3+axz^2+bz^3$,
where the marked point is the point at infinity $[0:1:0]$. Moreover, if we denote such a curve by
$C_{(a,b)}$, then
$$
C_{(a,b)}\cong C_{(a',b')}\quad\text{if and only if}\quad(a',b')=(t^{-4}a,t^{-6}b).
$$
The isomorphism
between these curves is given by
$$
[x:y:z]\mapsto[t^{-2}x:t^{-3}y:z].
$$
An elliptic curve
is smooth if and only if $D=4a^3+27b^2=0$, nodal if and only if $D=0$ and $(a,b)\neq(0,0)$, and
cuspidal if and only if $(a,b)=(0,0)$. Lastly, we have
$$
H^0(\omega_C)=\left<\frac{dx}y\right>.
$$
\end{theorem}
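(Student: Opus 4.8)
The plan is to build the Weierstrass equation directly from Riemann--Roch applied to the linear systems $|nO|$, where $O$ denotes the marked point, and then to read off the remaining three assertions---the isomorphism criterion, the singularity dichotomy, and the generator of $H^0(\omega_C)$---from the resulting explicit model. First I would compute $\ell(nO)$ for a smooth genus one curve $C$. Since $\deg K_C = 2g-2 = 0$, for every $n \geq 1$ the divisor $K_C - nO$ has negative degree, so $\ell(K_C - nO) = 0$ and Riemann--Roch gives $\ell(nO) = n$. Building a basis step by step, $|O|$ is spanned by the constants, $|2O|$ introduces a function $x$ with a double pole at $O$, and $|3O|$ introduces a function $y$ with a triple pole. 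The systems $|4O|, |5O|, |6O|$ then have dimensions $4,5,6$, yet the seven elements $1, x, y, x^2, xy, x^3, y^2$ all lie in the six-dimensional $|6O|$; hence they satisfy a linear relation. Matching the order-six poles of $x^3$ and $y^2$ lets me normalize those leading coefficients and rearrange into the long Weierstrass form $y^2 + a_1 xy + a_3 y = x^3 + a_2 x^2 + a_4 x + a_6$. Because $\fieldchar \mathbbm k \neq 2$ I can complete the square in $y$ to eliminate $a_1, a_3$, and because $\fieldchar \mathbbm k \neq 3$ I can complete the cube in $x$ to eliminate $a_2$, arriving at $y^2 = x^3 + ax + b$. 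Homogenizing and checking that $(x,y)$ give a closed embedding (as $\deg 3O = 3 = 2g+1$) produces the projective equation with $O \mapsto [0:1:0]$.

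Next I would pin down the isomorphism criterion, which I expect to be the main obstacle. Any isomorphism $C_{(a,b)} \xrightarrow{\sim} C_{(a',b')}$ of marked curves must carry $|3O|$ to $|3O'|$, hence is induced by a linear automorphism of $\mathbb P^2$ fixing $[0:1:0]$ and preserving the short Weierstrass shape. The hard part will be the bookkeeping: I must verify that the two normalizations---the vanishing of the $x^2$ coefficient and the matched leading coefficients of $x^3$ and $y^2$---cut the automorphism group of the model down to \emph{exactly} the one-parameter family of scalings, with no residual translations or shears surviving. Once this is established, the surviving automorphism is forced to be $[x:y:z] \mapsto [t^{-2}x : t^{-3}y : z]$, under which $(a,b) \mapsto (t^{-4}a, t^{-6}b)$; conversely this substitution visibly transforms one equation into the other, giving the ``if'' direction.

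Finally, for the singularity dichotomy I would work in the affine chart $z = 1$ and locate the common zeros of $y^2 - x^3 - ax - b$ and its partials. In $\fieldchar \mathbbm k \neq 2$ this forces $y = 0$ together with a repeated root of $x^3 + ax + b$, which occurs precisely when $D = 4a^3 + 27b^2$ vanishes (the discriminant of the cubic, up to sign); thus the curve is smooth exactly when $D \neq 0$. Writing the cubic as $(x - x_0)^2(x - x_1)$ and examining the leading terms of the local equation at $(x_0,0)$ then separates the node (double but not triple root, equivalently $(a,b) \neq (0,0)$) from the cusp (triple root, which forces $a = b = 0$). For the last claim I would verify that $\omega = dx/y = 2\,dy/(3x^2+a)$ is regular and nonvanishing everywhere: the two expressions cover the loci $y \neq 0$ and $y = 0$ respectively, and a local-parameter computation handles $O$ itself. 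Since $H^0(\omega_C)$ is one-dimensional, a single nonvanishing global section is a generator, establishing $H^0(\omega_C) = \langle dx/y \rangle$.
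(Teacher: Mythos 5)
The paper does not prove this theorem at all: it is stated as ``the classically known Weierstrass form'' and used as a black box, so there is no internal argument to compare against. Your Riemann--Roch derivation is the standard textbook proof (as in Silverman or Hartshorne IV.4) and the outline is sound: $\ell(nO)=n$ follows correctly from $\deg K_C=0$, the seven functions $1,x,y,x^2,xy,x^3,y^2$ in the six-dimensional space $L(6O)$ give the cubic relation, and the relation must involve $x^3$ and $y^2$ with nonzero coefficients since they are the only members with a pole of exact order $6$ at $O$; completing the square and the cube uses exactly the hypotheses $\fieldchar\mathbbm k\neq 2,3$. The singularity dichotomy and the regularity of $dx/y$ are also handled correctly. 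Two small points you defer but should be aware of are genuinely routine: (i) the ``bookkeeping'' you flag in the isomorphism criterion amounts to writing the most general change of variables compatible with the filtration $L(2O)\subseteq L(3O)$, namely $x=\alpha x'+r$, $y=\beta y'+sx'+w$, and checking that preserving the vanishing of the $x^2$, $xy$, $y$, and constant-in-$x$ coefficients forces $r=s=w=0$ and $\beta^2=\alpha^3$, so $(\alpha,\beta)=(t^{-2},t^{-3})$ for $t=\alpha/\beta$; (ii) your Jacobian computation only covers the affine chart $z=1$, and you should add the one-line check that $[0:1:0]$ is always a smooth point (in the chart $y=1$ the partial of $z-x^3-axz^2-bz^3$ with respect to $z$ is $1$ at the origin). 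With those two verifications written out, your argument is a complete and correct proof of a statement the paper leaves uncited in detail.
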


Rephrasing this gives the following corollaries:

\begin{corollary}\label{m11 presentations}
The Weierstrass form give an isomorphism
$$
\mg{1,1}\cong\left[\frac{\mathbb A^2\setminus V(D)}{\mathbb G_m}\right],
$$
where the $\mathbb G_m$ action has weight $(-4,-6)$ and $D=4a^3+27b^2$.
\end{corollary}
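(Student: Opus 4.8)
The plan is to exhibit the equivalence by comparing the two sides as moduli functors, matching their groupoids of $S$-points functorially in the test scheme $S$. Recall that for a scheme $S$ the groupoid $\left[\frac{\mathbb A^2\setminus V(D)}{\mathbb G_m}\right](S)$ consists of pairs $(P,f)$ in which $P\to S$ is a $\mathbb G_m$-torsor and $f\colon P\to\mathbb A^2\setminus V(D)$ is a $\mathbb G_m$-equivariant morphism for the weight $(-4,-6)$ action, with morphisms the torsor isomorphisms commuting with $f$. The goal is to identify this groupoid with the groupoid of families $(\pi\colon C\to S,\sigma)$ of smooth one-pointed genus-one curves, naturally in $S$.

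First I would build a morphism \emph{out of} the quotient stack. Over $\mathbb A^2\setminus V(D)$ there is a tautological Weierstrass family $\mathcal C\subset\mathbb P^2\times(\mathbb A^2\setminus V(D))$ cut out by $y^2z=x^3+axz^2+bz^3$, equipped with the section at infinity $[0:1:0]$; by the Weierstrass theorem its fibres are exactly the smooth curves, so $\mathcal C$ is a family of one-pointed elliptic curves. The explicit isomorphism $[x:y:z]\mapsto[t^{-2}x:t^{-3}y:z]$ shows that $\mathcal C$ is equivariant for the $\mathbb G_m$-action that scales $(a,b)$ by $(t^{-4},t^{-6})$ and acts on $\mathbb P^2$ by $[t^{-2}x:t^{-3}y:z]$. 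Equivariant descent then produces a family over the quotient, i.e.\ a morphism $F\colon\left[\frac{\mathbb A^2\setminus V(D)}{\mathbb G_m}\right]\to\mg{1,1}$.

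The real work is the inverse $G$, which amounts to upgrading the pointwise Weierstrass form to a relative statement over an arbitrary base. Given a family $(\pi\colon C\to S,\sigma)$, I would form the Hodge line bundle $\omega=\sigma^*\Omega_{C/S}=\pi_*\omega_{C/S}$ and study the pushforwards $\pi_*\mathcal O(n\sigma)$. Since $\deg(n\sigma)>2g-2=0$ for $n\geq1$, Riemann--Roch together with cohomology and base change shows these are locally free of the expected rank with formation commuting with base change, yielding relative coordinates $x\in\pi_*\mathcal O(2\sigma)$ and $y\in\pi_*\mathcal O(3\sigma)$, well defined up to rescaling by a frame of $\omega$. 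Because $\fieldchar\mathbbm k\neq2,3$ one completes the square and cube to reach the short form $y^2z=x^3+axz^2+bz^3$, where $a$ and $b$ become regular functions on the $\mathbb G_m$-torsor $P$ of trivializations of $\omega$ (equivalently, sections of appropriate powers of $\omega$) that one checks transform with weights $-4$ and $-6$. This gives a $\mathbb G_m$-equivariant map $P\to\mathbb A^2$, and relative smoothness forces the discriminant to be nowhere vanishing, so the image lands in $\mathbb A^2\setminus V(D)$; this defines $G\colon\mg{1,1}\to\left[\frac{\mathbb A^2\setminus V(D)}{\mathbb G_m}\right]$.

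Finally I would verify that $F$ and $G$ are mutually inverse. Essential surjectivity and $G\circ F\simeq\id$ are built into the constructions, while the content of $F\circ G\simeq\id$ and of full faithfulness is precisely that the only identifications between Weierstrass models are the $t$-scalings, which is exactly the criterion $(a',b')=(t^{-4}a,t^{-6}b)$ furnished by the Weierstrass theorem. I expect the main obstacle to be the relative Weierstrass form itself: checking that the sheaves $\pi_*\mathcal O(n\sigma)$ behave uniformly in families and that the reduction to short form can be performed globally, as genuine sections of powers of $\omega$, rather than merely fibre by fibre.
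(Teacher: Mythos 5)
Your proposal is correct, and it is substantially more than the paper itself provides: the paper gives no proof of this corollary at all, presenting it as a direct ``rephrasing'' of the Weierstrass theorem --- objects are equations $(a,b)$ with $D\neq0$, and isomorphisms are exactly the scalings $(a',b')=(t^{-4}a,t^{-6}b)$ --- which is precisely the pointwise content of your groupoid comparison. What you add, and what a genuinely complete proof of a stack isomorphism requires, is the relative statement over an arbitrary base: the tautological Weierstrass family over $\mathbb A^2\setminus V(D)$ with equivariant descent to produce the morphism $F$, and in the other direction the construction of Weierstrass coordinates in families via the sheaves $\pi_*\mathcal O(n\sigma)$, cohomology and base change, and completing the square and cube (using $\fieldchar\mathbbm k\neq2,3$), so that $(a,b)$ become functions of weights $(-4,-6)$ on the frame torsor of $\omega$. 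You correctly flag this relative Weierstrass form as the real technical content; it is classical (this is the standard presentation of $\mg{1,1}$, as in Edidin--Graham or Fulton--Olsson), which is presumably why the paper felt entitled to omit it. One point worth making explicit in your sketch of $G$: once you demand the short form, the normalizations that kill the $xy$, $y$, and $x^2$ terms consume exactly the remaining ambiguity in the lifts $x\in\pi_*\mathcal O(2\sigma)$ and $y\in\pi_*\mathcal O(3\sigma)$, so that for a fixed trivialization of $\omega$ the pair $(a,b)$ is uniquely determined; this uniqueness is what makes $(a,b)$ honest equivariant functions on the torsor (rather than data defined up to further choices), and it is also what yields $F\circ G\simeq\id$ and full faithfulness, since it reduces both to the fibrewise uniqueness statement of the Weierstrass theorem.
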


\begin{corollary}\label{m12 presentation}
We have that $\mg{1,2}$ is isomorphic to an open substack of a vector bundle over $B\mathbb G_m$.
\end{corollary}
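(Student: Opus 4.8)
The plan is to realize $\mg{1,2}$ as a moduli of elliptic curves with a second marked point, and to show that the choice of second marked point cuts out an open substack of a vector bundle over $\mg{1,1} \cong [(\mathbb A^2 \setminus V(D))/\mathbb G_m]$. The idea is that a two-pointed smooth elliptic curve is a one-pointed curve $C_{(a,b)}$ in Weierstrass form together with a second marked point $P \in C_{(a,b)}$ distinct from the point at infinity. Writing $P = [x:y:1]$ in affine coordinates, the data of $(a,b,x,y)$ subject to the Weierstrass relation $y^2 = x^3 + ax + b$ describes such pairs, so I expect $\mg{1,2}$ to be the substack of the affine variety cut out by this relation lying over $\mathbb A^2 \setminus V(D)$, modulo the $\mathbb G_m$-action.

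First I would make the group action precise. Under the isomorphism $C_{(a,b)} \cong C_{(a',b')}$ from the Weierstrass theorem with $(a',b') = (t^{-4}a, t^{-6}b)$, the coordinate change $[x:y:z] \mapsto [t^{-2}x : t^{-3}y : z]$ sends the affine point $(x,y)$ to $(t^{-2}x, t^{-3}y)$. Thus $\mathbb G_m$ acts on the coordinates $(a,b,x,y)$ with weights $(-4,-6,-2,-3)$, and I would check that the Weierstrass relation $y^2 = x^3 + ax + b$ is homogeneous of weight $-6$ for this action, hence $\mathbb G_m$-invariant. This exhibits $\mg{1,2}$ as a quotient stack of the form $[W/\mathbb G_m]$ where $W$ is the locus inside the relation hypersurface lying over $\mathbb A^2 \setminus V(D)$.

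The next step is to view this relation hypersurface as a vector bundle. The key observation is that for fixed $(a,b)$, the equation $y^2 = x^3 + ax + b$ is a graph: given $x$, the value of $y$ is not free, but instead I would parametrize the second marked point differently. Over $B\mathbb G_m$, the Hodge bundle and its tensor powers furnish line bundles whose total space carries the weights $-2$ and $-3$; the affine coordinates $x$ and $y$ are sections of $\mathbb L^{\otimes 2}$ and $\mathbb L^{\otimes 3}$ respectively, where $\mathbb L$ is the line bundle on $B\mathbb G_m$ of weight $-1$. The pair $(x,y)$ therefore defines a point of the rank-two vector bundle $\mathbb L^{\otimes 2} \oplus \mathbb L^{\otimes 3}$ over $B\mathbb G_m$, and $(a,b)$ similarly defines a point of $\mathbb L^{\otimes 4} \oplus \mathbb L^{\otimes 6}$. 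Assembling these, $\mg{1,2}$ is an open substack (namely, where the second point is distinct from the first, i.e. where we are not at the point at infinity and $D \neq 0$) of the total space of a vector bundle over $B\mathbb G_m$.

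The main obstacle I anticipate is bookkeeping rather than conceptual: one must verify that the Weierstrass relation can be eliminated so that $\mg{1,2}$ is genuinely an open substack of a \emph{total space of a vector bundle} and not merely of a hypersurface therein. Concretely, I would solve the relation for one coordinate—writing $b = y^2 - x^3 - ax$—so that $(a, x, y)$ becomes a free coordinate system, realizing the relation hypersurface as the total space of $\mathbb L^{\otimes 4} \oplus \mathbb L^{\otimes 2} \oplus \mathbb L^{\otimes 3}$ over $B\mathbb G_m$ with $b$ a dependent quantity. The open condition then becomes $D = 4a^3 + 27b^2 \neq 0$ together with the requirement that $P$ avoid the marked point at infinity. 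Checking that this open condition is compatible with the vector bundle structure, and that no additional identifications arise from the $\mathbb G_m$-stabilizers, is where the care is needed; but once the weights are confirmed this follows directly from Corollary \ref{m11 presentations}.
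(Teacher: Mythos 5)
Your proposal is correct and takes essentially the same route as the paper's proof: parametrize two-pointed curves by $(a,b,x,y)$ satisfying the Weierstrass relation, eliminate $b=y^2-x^3-ax$ so that $(a,x,y)$ vary freely with $\mathbb G_m$-weights $(-4,-2,-3)$, and identify $\mg{1,2}$ with the open locus $D\neq0$ inside $\left[\mathbb A^3_{a,x,y}/\mathbb G_m\right]$, a rank-three vector bundle over $B\mathbb G_m$. The only superfluous step is your ``avoid the point at infinity'' condition, which is automatic since the affine coordinates $(x,y)$ only parametrize points away from $p_1=[0:1:0]$.
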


\begin{proof}
From the Weierstrass form,
a two-pointed smooth elliptic curve is determined, up to scaling, by a choice of $(a,b)$ and $(x,y)$ such that
$$
y^2=x^3+ax+b \quad\text{and}\quad D\neq0.
$$
We can solve for $b$ to see that $a,x,y$ vary freely, provided that $D\neq0$, where
$$
D=4a^3+27b^2=4a^3+27(y^2-(x^3+ax))^2.
$$
Since $\mathbb G_m$ acts with weights $-4,-2,-3$ on $a,x,y$, we conclude that $\mg{1,2}$ is open
in $\left[\frac{\mathbb A^3_{a,x,y}}{\mathbb G_m}\right]$,
where $\mathbb G_m$ acts with the above weights.
\end{proof}

\begin{corollary}\label{m11 and m12 quotients}
The rings $\CH(\mg{1,1})$ and $\CH(\mg{1,2})$ are both quotients of $\mathbb Z[x]/(12x)$.
\end{corollary}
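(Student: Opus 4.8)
The plan is to realize both Chow rings as explicit quotients of $\CH(B\mathbb{G}_m)=\mathbb Z[t]$ by means of the excision sequence, using the quotient presentations of Corollaries \ref{m11 presentations} and \ref{m12 presentation}. First I would recall that $\CH(B\mathbb{G}_m)=\mathbb Z[t]$, with $t=c_1$ of the weight-one line bundle, and that for a linear $\mathbb{G}_m$-action on $\mathbb A^N$ the projection $[\mathbb A^N/\mathbb{G}_m]\to B\mathbb{G}_m$ realizes the source as a vector bundle over the target; homotopy invariance then gives $\CH([\mathbb A^N/\mathbb{G}_m])\cong\mathbb Z[t]$. Applying this with $N=2$ and weights $(-4,-6)$, and with $N=3$ and weights $(-4,-2,-3)$, disposes of the two ambient stacks.

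Next I would invoke excision. By Corollary \ref{m11 presentations}, $\mg{1,1}$ is the open complement of the discriminant locus $V(D)$ in $[\mathbb A^2/\mathbb{G}_m]$, and by (the proof of) Corollary \ref{m12 presentation}, $\mg{1,2}$ sits inside $[\mathbb A^3/\mathbb{G}_m]$ as the open locus where $D\neq0$, hence is again disjoint from the hypersurface $V(D)$. The excision sequence
$$\CH(V(D)/\mathbb{G}_m)\xrightarrow{p_*}\CH([\mathbb A^N/\mathbb{G}_m])\to\CH(U)\to0,$$
where $U$ denotes the relevant open stack, then immediately exhibits each Chow ring as a quotient of $\mathbb Z[t]$ generated by the restriction of $t$. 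It remains only to produce the relation $12t=0$.

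The key computation is the class $[V(D)]\in\CH^1([\mathbb A^N/\mathbb{G}_m])=\mathbb Z\,t$. In each case $D$ is a $\mathbb{G}_m$-semi-invariant of weight $-12$: for $\mg{1,1}$, $D=4a^3+27b^2$ is homogeneous of weight $-12$ under the weights $(-4,-6)$; for $\mg{1,2}$, one checks that every monomial of $D=4a^3+27(y^2-(x^3+ax))^2$ has weight $-12$ under the weights $(-4,-2,-3)$. Interpreting $D$ as a regular section of the weight-$(-12)$ line bundle pulled back from $B\mathbb{G}_m$, its vanishing divisor has class $[V(D)]=-12t$, the sign being immaterial. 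Since $V(D)$ is disjoint from $U$, this class restricts to zero on $U$, forcing $12t=0$ there. Hence each Chow ring is a quotient of $\mathbb Z[t]/(12t)\cong\mathbb Z[x]/(12x)$, as claimed.

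The only real content I expect is the weight bookkeeping identifying $[V(D)]=-12t$: one must be careful that the fundamental class of the hypersurface equals $c_1$ of the line bundle of which $D$ is a section (so that multiplicities are accounted for), rather than, say, the class of the reduced cuspidal cubic. Everything else — homotopy invariance over $B\mathbb{G}_m$ and right-exactness of excision — is formal.
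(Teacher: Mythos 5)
Your proof is correct and is essentially the paper's argument spelled out in full: the paper's proof is a one-line appeal to the presentations of Corollaries \ref{m11 presentations} and \ref{m12 presentation} together with the fact that $D$ has weight $12$, which is exactly the excision-plus-weight-bookkeeping computation you carry out. The extra care you take with the identification $[V(D)]=c_1$ of the weight-$(-12)$ line bundle (rather than the class of the reduced discriminant) is the right point to be careful about and matches the paper's intent.
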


\begin{proof}
This follows from Corollaries \ref{m11 presentations} and \ref{m12 presentation}, along with the fact
that $D$ has weight 12 under the $\mathbb G_m$ action.
\end{proof}

\begin{corollary}\label{Hodge bundle generator}
The generator of the Chow ring of $\mg{1,1}$ and $\mg{1,2}$ is $\lambda_1$, the first Chern class of the
Hodge bundle.
\end{corollary}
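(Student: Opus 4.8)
The plan is to identify the abstract generator $x$ from Corollary \ref{m11 and m12 quotients} with the Hodge class $\lambda_1$ by computing the weight of the Hodge bundle under the $\mathbb{G}_m$-action. Recall that under the presentations of Corollaries \ref{m11 presentations} and \ref{m12 presentation}, the class $x$ is the first Chern class of the weight-one tautological line bundle pulled back from $B\mathbb{G}_m$, so that a $\mathbb{G}_m$-equivariant line bundle of weight $w$ has first Chern class $w\cdot x$. Thus it suffices to show that the Hodge bundle $\mathbb{E}$, whose fiber over $C_{(a,b)}$ is $H^0(\omega_C)=\langle dx/y\rangle$, descends from a weight $\pm1$ representation; its first Chern class $\lambda_1$ will then equal $\pm x$, which is a generator of $\mathbb{Z}[x]/(12x)$ and hence of either quotient.

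First I would make the equivariant structure on $\mathbb{E}$ explicit. The Weierstrass theorem supplies, for each $t\in\mathbb{G}_m$, the isomorphism $\phi_t:C_{(a,b)}\to C_{(t^{-4}a,\,t^{-6}b)}$ given by $[x:y:z]\mapsto[t^{-2}x:t^{-3}y:z]$. Tracking the generating differential through $\phi_t^*$ is then a one-line computation: since the coordinates scale by $t^{-2}$ and $t^{-3}$ on $x$ and $y$ respectively, the form $dx/y$ scales by $t^{-2}/t^{-3}=t$, so the generator of $H^0(\omega)$ transforms with weight $\pm1$ (the sign being fixed once one pins down the convention relating $\phi_t^*$ to the equivariant action). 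This exhibits $\mathbb{E}$ as a weight $\pm1$ line bundle, whence $\lambda_1=\pm x$.

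For $\mg{1,2}$ the same argument applies: under the presentation of Corollary \ref{m12 presentation} the forgetful map to $\mg{1,1}$ pulls the Hodge bundle back to the line bundle with the identical fiber $\langle dx/y\rangle$, and the coordinates $x,y$ again carry weights $-2,-3$, so the weight computation is unchanged and $\lambda_1=\pm x$ once more.

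The main obstacle I anticipate is purely one of bookkeeping with signs and conventions: one must reconcile (i) the convention under which $x$ is declared the positive generator of $\CH(B\mathbb{G}_m)$, (ii) the stated $\mathbb{G}_m$-weights $(-4,-6)$ on $(a,b)$ together with the fact that $D$ ``has weight $12$'', and (iii) the direction in which $\phi_t^*$ acts on cohomology, so that the final weight comes out to a unit in $\mathbb{Z}$ rather than being off by a sign or misidentified. Since the conclusion only requires the weight to be $\pm1$ (any unit), this sign ambiguity does not affect the statement that $\lambda_1$ is a generator.
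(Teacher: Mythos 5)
Your proposal is correct and follows essentially the same route as the paper: both compute that $dx/y$ has weight $\pm1$ under the $\mathbb G_m$-action from the weights $-2,-3$ on $x,y$, and conclude that $\lambda_1$ is (up to sign) the pullback of the generator $x$ of $\CH(B\mathbb G_m)$. Your extra care about sign conventions is reasonable but, as you note, immaterial since any unit multiple of $x$ generates.
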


\begin{proof}
Since $\mathbb G_m$ acts with weights $-2$ and $-3$ on $x$ and $y$, respectively, we see that
$\frac{dx}y$ has weight $1$ under the $\mathbb G_m$ action. Hence under the pullback map
$\CH(B\mathbb G_m)\rightarrow\CH(\mgtilde{1,1})$ we have $x\mapsto\lambda_1$.
Since by the previous Corollary $\CH(\mg{1,1})$ and $\CH(\mg{1,2})$ are generated by the pullback of $x$,
we see that they are generated by $\lambda_1$.
\end{proof}

\begin{corollary}\label{Hodge bundle pullback}
The pullback of $x\in\CH(B\mathbb G_m)$ to any moduli stack of pointed elliptic curves is $\lambda_1$.
\end{corollary}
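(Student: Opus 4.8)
The plan is to reduce the statement to the computation already carried out for $\mg{1,1}$ in Corollary \ref{Hodge bundle generator}, by observing that the identification $x\mapsto\lambda_1$ is natural with respect to the morphisms that forget marked points. Concretely, every moduli stack of pointed elliptic curves $\mathcal X$ admits a forgetful morphism $\pi\colon\mathcal X\to\mg{1,1}$ remembering only the underlying curve together with its origin, and the map $\mathcal X\to B\mathbb G_m$ arising from the Weierstrass presentation factors as $\mathcal X\xrightarrow{\pi}\mg{1,1}\to B\mathbb G_m$. Granting this, the pullback of $x$ to $\mathcal X$ equals $\pi^*$ of its pullback to $\mg{1,1}$, which is $\pi^*\lambda_1$ by Corollary \ref{Hodge bundle generator}.

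First I would verify the factorization through $B\mathbb G_m$ at the level of torsors: the $\mathbb G_m$ appearing in every Weierstrass presentation (Corollaries \ref{m11 presentations} and \ref{m12 presentation}) is the one rescaling $(x,y)$ with weights $(-2,-3)$, and this rescaling is intrinsic to the underlying one-pointed curve rather than to the extra marked points. Thus the weight-$1$ character of $\mathbb G_m$, whose first Chern class is $x$, defines a line bundle on $B\mathbb G_m$ whose pullback to $\mathcal X$ depends only on $\pi$. Second, I would identify $\pi^*\lambda_1$ with $\lambda_1$ on $\mathcal X$ using functoriality of the Hodge bundle: since the Hodge bundle is $\pi_*\omega$ for the relative dualizing sheaf of the universal curve, and forgetting marked points leaves the curve (hence $\omega$ and its pushforward) unchanged, the Hodge bundle on $\mathcal X$ is canonically the pullback of the Hodge bundle on $\mg{1,1}$. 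Taking first Chern classes gives $\pi^*\lambda_1=\lambda_1$, completing the chain.

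Alternatively, and perhaps more transparently, one can simply observe that the weight computation of Corollary \ref{Hodge bundle generator} is uniform in $n$: in any Weierstrass presentation the generator $\frac{dx}{y}$ of $H^0(\omega_C)$ transforms with weight $(-2)-(-3)=1$ under the scaling, independently of how many marked points are recorded. Hence the Hodge line bundle is always the one associated to the weight-$1$ character, and $x$ always pulls back to its first Chern class $\lambda_1$.

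I expect the only genuine subtlety to be the compatibility asserted in the first step, namely that the $\mathbb G_m$-torsors underlying the various Weierstrass presentations are really identified under the forgetful morphisms, so that ``the pullback of $x$'' is unambiguous across all these stacks. Once this naturality is in place everything else is formal, and the uniform weight computation makes the conclusion immediate.
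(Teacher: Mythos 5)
Your proposal is correct and follows essentially the same route as the paper: the paper likewise factors the map to $B\mathbb G_m$ through the forgetful morphism to the one-pointed Weierstrass stack, invokes the weight-$1$ computation for $\frac{dx}{y}$ to get $x\mapsto\lambda_1$ there, and uses that the Hodge bundle pulls back to the Hodge bundle under forgetting marked points. The only cosmetic difference is that the paper routes through $\mgtilde{1,1}$ rather than $\mg{1,1}$, which changes nothing.
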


\begin{proof}
This follows since $x$ pulls-back to $\lambda_1$ in $\CH(B\mathbb G_m)\rightarrow\CH(\mgtilde{1,1})$ and the
Hodge bundle pulls-back to the Hodge bundle.
\end{proof}

\begin{theorem}\label{n=1 Chow rings}
Let $\lambda_1$ be the first Chern class of the Hodge bundle.
Over a field of characteristic not equal to 2 or 3
\begin{enumerate}[label=(\alph*)]
\item $\CH(\mg{1,1})\cong\mathbb Z[\lambda_1]/(12\lambda_1)$
\item $\CH(\mg{1,2})\cong\mathbb Z[\lambda_1]/(12\lambda_1)$.
\end{enumerate}
\end{theorem}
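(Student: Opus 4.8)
The plan is to present each stack as the complement of the discriminant inside a linear quotient of affine space, and then run the excision sequence of the introduction, the content being a computation of the image of the pushforward from the discriminant locus. Note that Corollary \ref{m11 and m12 quotients} already produces surjections $\mathbb Z[\lambda_1]/(12\lambda_1)\twoheadrightarrow\CH(\mg{1,1})$ and $\mathbb Z[\lambda_1]/(12\lambda_1)\twoheadrightarrow\CH(\mg{1,2})$, with the generator identified by Corollary \ref{Hodge bundle generator}. So the whole theorem amounts to \emph{injectivity} of these maps; equivalently, that the discriminant imposes no relation finer than $12\lambda_1$.

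For $\mg{1,1}$ I would start from Corollary \ref{m11 presentations}, which gives $\mg{1,1}=[(\mathbb A^2\setminus V(D))/\mathbb G_m]$. Since $[\mathbb A^2/\mathbb G_m]\to B\mathbb G_m$ is a vector bundle, homotopy invariance and Corollary \ref{Hodge bundle pullback} give $\CH([\mathbb A^2/\mathbb G_m])=\CH(B\mathbb G_m)=\mathbb Z[\lambda_1]$, and the excision sequence becomes
$$\CH([V(D)/\mathbb G_m])\xrightarrow{\ i_*\ }\mathbb Z[\lambda_1]\longrightarrow\CH(\mg{1,1})\longrightarrow0.$$
Thus I must show $\im(i_*)=(12\lambda_1)$. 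The inclusion $(12\lambda_1)\subseteq\im(i_*)$ is the projection formula together with $i_*(1)=[V(D)]=12\lambda_1$, the discriminant having weight $12$ (Corollary \ref{m11 and m12 quotients}). For the reverse inclusion it is enough that the restriction $i^*\colon\mathbb Z[\lambda_1]\to\CH([V(D)/\mathbb G_m])$ be surjective, since then every class on the discriminant is $i^*(\text{polynomial})$ and, by the projection formula, pushes into $(12\lambda_1)$. To obtain this surjectivity I would normalize: $V(D)=\{4a^3+27b^2=0\}$ is a cuspidal cubic, and $s\mapsto(-3s^2,2s^3)$ is a $\mathbb G_m$-equivariant universal homeomorphism $\mathbb A^1\to V(D)$ with $s$ of weight $-2$. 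Pushing forward along it and applying homotopy invariance to $[\mathbb A^1/\mathbb G_m]$ identifies $\CH([V(D)/\mathbb G_m])$ with $\mathbb Z[\lambda_1]$, generated by the restriction of $\lambda_1$. Hence $\im(i_*)=(12\lambda_1)$ and $\CH(\mg{1,1})=\mathbb Z[\lambda_1]/(12\lambda_1)$.

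For $\mg{1,2}$ the argument is parallel. By Corollary \ref{m12 presentation}, $\mg{1,2}$ is the complement of $V(D')$ in $[\mathbb A^3_{a,x,y}/\mathbb G_m]$, where $D'=4a^3+27(y^2-x^3-ax)^2$ again has weight $12$, and homotopy invariance gives $\CH([\mathbb A^3/\mathbb G_m])=\mathbb Z[\lambda_1]$. The same excision sequence reduces the claim to $\im(i'_*)=(12\lambda_1)$, where the lower bound is once more the projection formula applied to $i'_*(1)=12\lambda_1$. Here $V(D')$ is the preimage of the cuspidal discriminant under the equivariant projection $(a,x,y)\mapsto(a,\,y^2-x^3-ax)$, so I would establish surjectivity of restriction by combining the normalization of the base cusp above with homotopy invariance along the fibers of this projection.

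The main obstacle in both cases is the upper bound $\im(i_*)\subseteq(12\lambda_1)$, that is, controlling the equivariant Chow groups of the \emph{singular} discriminant locus tightly enough to rule out any image outside $(12\lambda_1)$; the $\mg{1,2}$ computation is the more delicate one because the discriminant is a two-dimensional, generically singular fibration rather than a single cuspidal curve. A second subtlety is that the normalization of a cusp is a universal homeomorphism, which over a field of positive characteristic is only guaranteed to induce an isomorphism on Chow groups after inverting $\fieldchar\mathbbm k$; one must check this cannot create spurious classes, but since $\fieldchar\mathbbm k\neq2,3$ it cannot affect the $2$- and $3$-torsion comprising $\mathbb Z/12$. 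Alternatively, one may simply invoke the computations of \cite{EG98} and \cite{Inc21} for the ring structure and use Corollary \ref{Hodge bundle generator} to identify the generator as $\lambda_1$.
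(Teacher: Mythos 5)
Your overall route is genuinely different from the paper's: you fix the upper bound the same way (Corollary \ref{m11 and m12 quotients}) but obtain the lower bound by computing $\im(i_*)$ from the discriminant exactly, whereas the paper never touches the discriminant again and instead restricts to the residual gerbes $B\mu_3\hookrightarrow\mg{1,2}$ and $B\mu_4\hookrightarrow\mg{1,2}$ of the curves $(C_{(0,1)},\infty,[0:1:1])$ and $(C_{(1,0)},\infty,[0:0:1])$. Since these sit over $B\mathbb G_m$ compatibly with $\lambda_1$, the quotient of $\mathbb Z/12$ in each positive degree surjects onto both $\mathbb Z/3$ and $\mathbb Z/4$, forcing it to be all of $\mathbb Z/12$; the $\mg{1,1}$ case follows by forgetting the second point. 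Your $\mg{1,1}$ argument is correct and is essentially the one in \cite{EG98}: the normalization $s\mapsto(-3s^2,2s^3)$ is an isomorphism away from a single point with trivial residue field extension on both sides, so the equivariant proper pushforward from $[\mathbb A^1/\mathbb G_m]$ is surjective integrally (your worry about inverting the characteristic is not even needed here, since surjectivity of $\nu_*$ only requires the residue field degrees, all of which are $1$), and the projection formula then pins $\im(i_*)=(12\lambda_1)$.

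The $\mg{1,2}$ half, however, has a genuine gap at its key step. You need $i'^*\colon\mathbb Z[\lambda_1]\to\CH([V(D')/\mathbb G_m])$ surjective, and you propose to get it by ``homotopy invariance along the fibers'' of $(a,x,y)\mapsto(a,\,y^2-x^3-ax)$. But the fibers of $V(D')\to V(D)$ are singular affine cubics (nodal generically, cuspidal over the origin), not affine spaces, so homotopy invariance does not apply. If you instead try to repeat the normalization trick, the problem gets worse rather than better: base-changing along $s\mapsto(-3s^2,2s^3)$ replaces $V(D')$ by the surface $y^2=(x-s)^2(x+2s)$ in $\mathbb A^3$, whose normalization $\mathbb A^2_{u,v}\to\{y^2=u^2(u+3s)\}$, $(u,v)\mapsto(u,\,v^2=u+3s)$-coordinates, is $2{:}1$ over the singular line $u=y=0$. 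The pushforward of the preimage of that line is therefore \emph{twice} its class, so surjectivity of the pushforward fails exactly in a way that leaves the $2$-primary part of $\im(i'_*)$ uncontrolled --- and $\mathbb Z/4\subset\mathbb Z/12$ is precisely what you need to pin down. Repairing this requires a further excision argument for the singular locus of $V(D')$ (in effect redoing a chunk of \cite{Inc21}); falling back on citing \cite{EG98} and \cite{Inc21}, as your last sentence offers, is legitimate but is no longer a proof. The paper's residual-gerbe argument avoids the discriminant entirely and is the cleaner route here.
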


\begin{proof}
From Corollaries \ref{m11 and m12 quotients} and \ref{Hodge bundle generator}
we know that $\CH(\mg{1,1})$ and $\CH(\mg{1,2})$
are both quotients of $\mathbb Z[\lambda_1]/(12\lambda_1)$.
Now consider any two-pointed elliptic curve with $\mu_3$ automorphisms, such as\\
$(C_{(0,1)},\infty, [0:1:1])$, and with $\mu_4$ automorphisms, such as
$(C_{(1,0)},\infty, [0:0:1])$. These induce residual gerbes
$$
\begin{tikzcd}
B\mu_n \arrow[dr]\arrow[rr]& & \mg{1,2}\arrow[dl]\\
& B\mathbb G_m&
\end{tikzcd}
$$
for $n=3,4$. On Chow rings this induces
$$
\begin{tikzcd}
B\mu_n& & \mg{1,2}\arrow[ll]\\
& B\mathbb G_m \arrow[ur] \arrow[ul]&
\end{tikzcd}
$$

Since $\CH(B\mathbb G_m)\rightarrow\CH(B\mu_n)$ is surjective, we see that
$\CH(\mg{1,2})$ surjects onto $\mathbb Z[x]/(nx)$ for $n=3,4$. Therefore $\CH(\mg{1,2})=
\mathbb Z[\lambda_1]/(12\lambda_1)$. Considering these curves as one-pointed elliptic curves shows that
$\CH(\mg{1,1})\cong\mathbb Z[\lambda_1]/(12\lambda_1)$ as well.
\end{proof}

\section{Aside: Higher Chow groups with $\ell$-adic coefficients}
In \cite{Bl86}, Bloch introduced higher Chow groups, which complete the excision exact
sequence into a long exact sequence. They are defined as the homology of a certain complex
named $z^*(X,\bullet)$ and are, unfortunately, usually rather difficult to compute.
In \cite{Lar21}, Larson used higher Chow groups with $\ell$-adic coefficients to remedy this. Without getting
into too much detail, we list here some important properties that higher Chow groups with $\ell$-adic
coefficients possess, along with some important computations.

\begin{definition}
Define the $n^{\text{th}}$ higher Chow group with $\ell$-adic coefficients to be
$$
\CH(X,n;\mathbb Z_{\ell})=H_n\left(\lim z^*(X_{\bar k},\bullet)\otimes^L\mathbb Z/\ell^m\mathbb Z\right).
$$
\end{definition}
In the case where each $\CH(X,n;\mathbb Z/\ell^m\mathbb Z):=H_n(z^*(X_{\bar k},\bullet)\otimes\mathbb Z/\ell^m\mathbb Z)$
is finitely generated, we have
$$
\CH(X,n;\mathbb Z_{\ell})=\lim\CH(X_{\bar k},n;\mathbb Z/\ell^m\mathbb Z).
$$

\begin{proposition}
If $Z\rightarrow X$ is closed with complement $U$ and:
\begin{itemize}
\item $\CH(Z)$ and $\CH(U)$ are finitely generated,
\item $\CH(Z)\rightarrow\CH(Z_{\bar k})$ is injective,
\item there exists at least one $\ell$ for which $\CH(U,1;\mathbb Z_{\ell})=0$,
\item and $\CH(U,1;\mathbb Z_l)=0$ whenever $\CH(Z)$ has $\ell$-torsion,
\end{itemize}
then the excision sequence is exact on the left.
\end{proposition}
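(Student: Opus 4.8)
The plan is to build the full localization long exact sequence for higher Chow groups and read off exactness on the left from the vanishing hypotheses. Recall that Bloch's higher Chow groups fit into a long exact sequence $\cdots\to\CH(U,1)\to\CH(Z)\xrightarrow{p_*}\CH(X)\to\CH(U)\to0$, so the failure of left-exactness of the ordinary excision sequence is precisely measured by the image of $\CH(U,1)\to\CH(Z)$. Hence it suffices to show that this boundary map is zero. The difficulty is that we only have control over the $\ell$-adic higher Chow groups $\CH(U,1;\mathbb Z_\ell)$ over $\bar k$, not over the higher Chow group $\CH(U,1)$ itself, so the argument must transfer vanishing across base change and across the coefficient change from $\mathbb Z$ to $\mathbb Z/\ell^m$ to $\mathbb Z_\ell$.

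First I would set up the integral localization sequence and argue that it suffices to kill the image of the boundary map $\partial:\CH(U,1)\to\CH(Z)$. I would split this image according to torsion: any element of $\CH(Z)$ is either torsion or has nonzero image in $\CH(Z_{\bar k})$, by the injectivity hypothesis $\CH(Z)\hookrightarrow\CH(Z_{\bar k})$. For the free part, I would base-change to $\bar k$ and compare $\partial$ with the boundary map $\partial_{\bar k}:\CH(U_{\bar k},1)\to\CH(Z_{\bar k})$ appearing in the geometric localization sequence; the compatibility of higher Chow groups with flat pullback makes the relevant square commute, so it is enough to control $\partial_{\bar k}$. For the torsion part, I would work one prime at a time: if $\CH(Z)$ has $\ell$-torsion I would use the hypothesis $\CH(U,1;\mathbb Z_\ell)=0$, and otherwise the torsion simply is not $\ell$-primary for that particular $\ell$.

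Next I would pass from the integral boundary map to its $\ell$-adic incarnation. Using the universal-coefficient spectral sequence relating $\CH(-,\bullet)$ to $\CH(-,\bullet;\mathbb Z/\ell^m\mathbb Z)$ and passing to the inverse limit, the hypothesis that $\CH(U)$ and $\CH(Z)$ are finitely generated guarantees that the relevant groups are finitely generated, so the $\mathbb Z_\ell$-coefficient groups compute what one expects and the limit behaves well. The vanishing $\CH(U,1;\mathbb Z_\ell)=0$ then forces the $\ell$-adic boundary to vanish, which in turn forces the image of $\partial$ to have no $\ell$-primary torsion and no free part detectable $\ell$-adically. Combining the two chosen primes — one witnessing the free part via $\CH(U,1;\mathbb Z_\ell)=0$, the others handling each torsion prime occurring in $\CH(Z)$ — shows $\partial=0$ and hence the excision sequence is exact on the left.

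The main obstacle I anticipate is the coefficient bookkeeping in the middle paragraph: carefully justifying that vanishing of $\CH(U,1;\mathbb Z_\ell)$ for a single $\ell$ is enough to rule out the free part of the boundary image, and that the inverse limit defining $\CH(U,1;\mathbb Z_\ell)$ does not hide a $\varprojlim^1$ obstruction. This is where the finite-generation hypotheses on $\CH(U)$ and $\CH(Z)$ and the injectivity $\CH(Z)\hookrightarrow\CH(Z_{\bar k})$ must be used in tandem, and I expect the cleanest route is to invoke the properties of $\ell$-adic higher Chow groups recorded in \cite{Lar21} rather than to reprove the universal-coefficient comparison from scratch.
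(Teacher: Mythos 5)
Your proposal is correct and follows essentially the same route as the paper: it likewise combines the injectivity $\CH(Z)\hookrightarrow\CH(Z_{\bar k})$ with a prime-by-prime choice of $\ell$ (one prime witnessing the non-torsion classes, the torsion primes of $\CH(Z)$ handling the rest) to obtain the chain of injections $\CH(Z)\otimes\mathbb Z_{\ell}\hookrightarrow\CH(Z_{\bar k})\otimes\mathbb Z_{\ell}\hookrightarrow\CH(X_{\bar k})\otimes\mathbb Z_{\ell}$. The only cosmetic difference is that the paper argues element-by-element that no nonzero class in $\CH(Z)$ can die in $\CH(X)$, rather than showing the boundary map $\CH(U,1)\rightarrow\CH(Z)$ vanishes outright.
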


\begin{proof}
Notice that, at first glance, $\ell$-adic higher Chow groups tell us about the injectivity of
the excision sequence with all spaces base-changed to $\bar k$. However, we can infer
the injectivity of $\CH(Z)\rightarrow\CH(X)$ via the following diagram
$$
\begin{tikzcd}
\CH(Z)\arrow[r,]\arrow[d, hookrightarrow] & \CH(X)\arrow[d]\\
\CH(Z_{\bar k})\arrow[r] & \CH(X_{\bar k})
\end{tikzcd}
$$
Let $\alpha\in\CH(Z)$, and, abusing notation, refer to its image in $\CH(Z_{\bar k})$ as $\alpha$ as well.
Pick an $\ell$ such that $\alpha$ is $\ell$-torsion (if $\alpha$ is not torsion, then pick any
$\ell$ where $U$'s first $\ell$-adic higher Chow group vanishes). This gives
$$
\CH(Z)\otimes\mathbb Z_{\ell}\hookrightarrow
\CH(Z_{\bar k})\otimes\mathbb Z_{\ell}\hookrightarrow\CH(X_{\bar k})\otimes\mathbb Z_{\ell},
$$
and so the image of $\alpha$ under $\CH(Z)\rightarrow\CH(X)$ cannot vanish.
\end{proof}

\begin{proposition} Suppose $\ell$ is coprime to $\fieldchar k$ (later we will always have $\ell=2$ or $3$).
Then
\begin{enumerate}[label=(\alph*)]
\item $\CH(\Spec k,1;\mathbb Z_l)=0$.
\item $\CH(\mathbb A^n,1;\mathbb Z_l)=0$.
\item $\CH(\mathbb P^n,1;\mathbb Z_l)=0$.
\item $\CH(B\mathbb G_m,1;\mathbb Z_l)=0$.
\item $\CH(B\mu_n,1;\mathbb Z_l)=0$.
\end{enumerate}
\end{proposition}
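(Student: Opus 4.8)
The plan is to prove (a) by a direct computation and then bootstrap the remaining four cases from it, exploiting the fact that the structural properties of Bloch's complex $z^*(-,\bullet)$—homotopy invariance, the projective bundle formula, the localization long exact sequence, and the equivariant approximation of classifying stacks—are all compatible with $\otimes^L\mathbb Z/\ell^m$ and therefore pass to $\ell$-adic coefficients. Throughout I would verify the vanishing in each fixed codimension $p$ separately, since there the groups are finitely generated and the identity $\CH(X,1;\mathbb Z_\ell)=\lim_m\CH(X,1;\mathbb Z/\ell^m)$ applies.

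For (a), recall that for the algebraically closed field $\bar k$ we have $\CH(\Spec\bar k,0)=\mathbb Z$ and $\CH(\Spec\bar k,1)=\bar k^*$, with no other contributions in these simplicial degrees. The universal coefficient sequence
$$0 \to \CH(\Spec\bar k,1)\otimes\mathbb Z/\ell^m \to \CH(\Spec\bar k,1;\mathbb Z/\ell^m) \to \CH(\Spec\bar k,0)[\ell^m]\to 0$$
then has both outer terms zero: $\bar k^*$ is divisible (every element of an algebraically closed field has $\ell^m$-th roots), so $\bar k^*\otimes\mathbb Z/\ell^m=0$, while $\mathbb Z$ is torsion-free, so $\mathbb Z[\ell^m]=0$. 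Hence $\CH(\Spec\bar k,1;\mathbb Z/\ell^m)=0$ for every $m$, and (a) follows on passing to the limit. This divisibility of $\bar k^*$ is really the engine driving the whole proposition.

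Cases (b)--(d) are then formal. For (b) I would use that the homotopy equivalence $z^*(\mathbb A^n_{\bar k},\bullet)\simeq z^*(\Spec\bar k,\bullet)$ survives $\otimes^L\mathbb Z/\ell^m$, reducing (b) to (a). For (c) I would invoke the projective bundle formula as a quasi-isomorphism $\bigoplus_{i=0}^n z^*(\Spec\bar k,\bullet)\xrightarrow{\sim}z^*(\mathbb P^n_{\bar k},\bullet)$ (up to codimension shifts); after tensoring and taking $H_1$ this writes $\CH(\mathbb P^n,1;\mathbb Z/\ell^m)$ as a sum of copies of $\CH(\Spec\bar k,1;\mathbb Z/\ell^m)=0$. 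Then (d) follows from the Totaro--Edidin-Graham approximation: in each fixed codimension $\CH(B\mathbb G_m,1;\mathbb Z/\ell^m)$ agrees with $\CH(\mathbb P^N,1;\mathbb Z/\ell^m)$ for $N\gg0$, which vanishes by (c), so the limit vanishes as well.

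The hard part will be (e). I would present $B\mu_n$ as the complement of the zero section of the line bundle $L^{\otimes n}$ on $B\mathbb G_m$, where $L$ is the universal line bundle with $c_1(L)=t$ generating $\CH^1(B\mathbb G_m)$; this is just the Kummer-theoretic identification of a $\mu_n$-torsor with a line bundle together with a trivialization of its $n$-th power. Running the localization sequence for this zero section, using homotopy invariance to identify the Chow groups of the total space with those of $B\mathbb G_m$ and the vanishing from (d), the sequence collapses to
$$0 \to \CH^p(B\mu_n,1;\mathbb Z/\ell^m)\to \CH^{p-1}(B\mathbb G_m,0;\mathbb Z/\ell^m)\xrightarrow{\;\cdot\,nt\;}\CH^p(B\mathbb G_m,0;\mathbb Z/\ell^m)$$
so that $\CH^p(B\mu_n,1;\mathbb Z/\ell^m)$ is the kernel of multiplication by $n$ on $\mathbb Z/\ell^m$. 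When $\ell\nmid n$ this kernel is already zero, but the delicate case—which genuinely occurs in our applications, e.g.\ $(\ell,n)=(3,3)$ and $(2,4)$—is $\ell\mid n$, where each individual group is a nonzero finite $\ell$-group. The key observation I would stress is that the transition maps in the inverse system are induced by the reductions $\mathbb Z/\ell^{m+1}\to\mathbb Z/\ell^m$, and a short computation shows these act as multiplication by $\ell$ on the kernels; since multiplication by $\ell$ is nilpotent on these bounded $\ell$-groups, both the limit and the $\lim^1$ term vanish, giving $\CH(B\mu_n,1;\mathbb Z_\ell)=0$. I expect this interchange of the localization computation with the inverse limit—the fact that $\ell$-adic coefficients wash out precisely the $\ell$-torsion appearing at each finite level—to be the only genuinely subtle point in the argument.
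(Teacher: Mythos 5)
Your proposal is correct and follows the same skeleton as the paper's (very terse) proof: (a) from the known motivic cohomology of a point, (b) and (c) from the homotopy and projective bundle formulas, (d) by Totaro--Edidin--Graham approximation, and (e) from the localization sequence for the zero section in $[\mathbb A^1/\mathbb G_m]\supset B\mu_n$. The one genuine divergence is how (e) is closed. The paper's displayed sequence $0\to\CH(B\mathbb G_m)\xrightarrow{\cdot nt}\CH([\mathbb A^1/\mathbb G_m])\to\CH(B\mu_n)\to0$ is meant to be used after passing to $\mathbb Z_\ell$-coefficients, where multiplication by $nt$ acts on the torsion-free module $\mathbb Z_\ell[t]$ and hence is injective even when $\ell\mid n$; combined with (d) this kills $\CH(B\mu_n,1;\mathbb Z_\ell)$ in one line. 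You instead work level by level with $\mathbb Z/\ell^m$-coefficients, where the kernel $(\mathbb Z/\ell^m)[n]$ is genuinely nonzero for $\ell\mid n$, and then check that the transition maps act by multiplication by $\ell$, so the inverse limit (and $\lim^1$, the system being pro-zero) vanishes. Both arguments are valid; yours makes explicit where the $\ell$-adic limit does its work, at the cost of one extra verification you should state explicitly: in the localization sequence you need the \emph{finite-level} vanishing $\CH^p(B\mathbb G_m,1;\mathbb Z/\ell^m)=0$, not merely the limit statement (d). This does follow from your own argument (divisibility of $\bar k^*$ kills the tensor term of the universal coefficient sequence, and torsion-freeness of $\CH^p(B\mathbb G_m)$ kills the Tor term), but it is a stronger statement than (d) as written.
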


\begin{proof}
As noted in \cite{Lar21}, (a) is a consequence of motivic cohomology. Then
(b) and (c) follow from the vector and projective bundle formulas, and (d) follows from computing
equivariantly. The last follows from the excision sequence
$$
0\rightarrow\CH(B\mathbb G_m)\rightarrow\CH([\mathbb A^1/\mathbb G_m])\rightarrow\CH(B\mu_n)\rightarrow0.
$$
\end{proof}

\section{The $\mg{1,n}$ case for $n=3,\dots,10$}

We will now compute the integral Chow rings of $\mg{1,n}$ for $n=3,\dots,10$. The overall structure
of the computation is to stratify $\mg{1,n}$ into an open whose complement stratifies into
closed substacks which are isomorphic to opens inside of $\mg{1,n-1}$.

\subsection{The integral Chow ring of $\mg{1,3}$}
We first stratify $\mg{1,n}$ into the open locus where $p_2\neq\iota(p_3)$ and the divisor where $p_2=\iota(p_3)$.

\begin{definition}\label{U_n definition}
For $n\geq2$:
\begin{enumerate}[label=(\alph*)]
\item Let $U_n\subseteq\mg{1,n}$ be the locus where $p_2\neq\iota(p_3)$.

\item Let $U_n'\subseteq\mg{1,n}$ be the
locus where $p_2\neq\iota(p_i)$ for any $i$.
\end{enumerate}
\end{definition}

\begin{note}
Note that the condition for $U_2$ is ill-defined. We use the convention that this is an empty condition,
so that $U_2=\mg{1,2}$.
\end{note}

\begin{observation}
Where $\pi$ is, as usual, the map $\pi:\mg{1,n}\rightarrow\mg{1,n-1}$ forgetting the last marked point,
we always have $\pi(U_{n+1})\subseteq U_n$ and
$\pi(U_{n+1}')\subseteq U_n'$, and for $n\geq 4$ we have $\pi^{-1}(U_{n-1})=U_n$.

Therefore we have induced pullback maps on Chow rings given by $\pi^*$. Since
$\pi^*(\lambda_1)=\lambda_1$, we see that
$\pi$ pulls relations back to relations: if $a\lambda_1=0$ in $\CH(U_m)$ or $\CH(U_m')$
for some $m$,
then $a\lambda_1=0$ on that same locus for all $n\geq m$.
\end{observation}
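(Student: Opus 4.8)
The plan is to reduce the three geometric statements to the single elementary fact that the forgetful morphism $\pi$ leaves the low-index marked points and the involution untouched, and then to feed flatness of $\pi$ into the usual machinery to produce $\pi^*$ on Chow rings.

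First I would record that $\pi:\mg{1,n}\to\mg{1,n-1}$ is smooth of relative dimension one, its fibre over a pointed curve being that curve with the remaining marked points removed, and hence flat; this is what will supply a flat pullback $\pi^*$ that is moreover a ring homomorphism. For the three incidence statements, the point to exploit is that the condition cutting out $U_m$ refers only to $p_2$ and $\iota(p_3)$, the condition cutting out $U_m'$ refers only to $p_2$ and the $\iota(p_i)$, and $\iota$ depends only on the underlying curve together with its origin $p_1$. Since $\pi$ forgets the \emph{last} point, deleting $p_{n+1}$ changes neither $p_1,\dots,p_n$ nor $\iota$, so any object satisfying the defining condition upstairs still satisfies it downstairs; this gives $\pi(U_{n+1})\subseteq U_n$ and $\pi(U_{n+1}')\subseteq U_n'$ at once. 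For the equality $\pi^{-1}(U_{n-1})=U_n$ I would compare defining open conditions: a point of $\mg{1,n}$ lies in $\pi^{-1}(U_{n-1})$ exactly when, after deleting $p_n$, one has $p_2\neq\iota(p_3)$, and for $n\geq4$ the deleted point $p_n$ is distinct from $p_2$ and $p_3$, so this is verbatim the condition defining $U_n$.

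The one place where care is needed is the hypothesis $n\geq4$, and I would flag the $n=3$ degeneration explicitly: there $\pi$ forgets $p_3$ itself, so $p_2\neq\iota(p_3)$ has nothing left to restrict after deletion, and indeed $\pi^{-1}(U_2)=\pi^{-1}(\mg{1,2})=\mg{1,3}\supsetneq U_3$, confirming the equality genuinely fails. Apart from this index bookkeeping I anticipate no real obstacle. Finally, combining flatness with the first two containments, $\pi$ restricts to flat morphisms $U_{n+1}\to U_n$ and $U_{n+1}'\to U_n'$, yielding ring homomorphisms $\pi^*:\CH(U_n)\to\CH(U_{n+1})$ and $\pi^*:\CH(U_n')\to\CH(U_{n+1}')$. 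By Corollary \ref{Hodge bundle pullback} we have $\pi^*\lambda_1=\lambda_1$, so $\pi^*$ fixes the subring $\mathbb Z[\lambda_1]$; hence if $a\lambda_1=0$ in $\CH(U_m)$ then, applying the iterated pullback into $\CH(U_n)$, we obtain $a\lambda_1=\pi^*(a\lambda_1)=\pi^*(0)=0$ for every $n\geq m$, and the identical argument on the primed loci completes the statement.
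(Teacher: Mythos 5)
The paper states this Observation without proof, and your argument supplies exactly the routine verification it leaves implicit: the defining conditions for $U_n$ and $U_n'$ involve only $p_1$ (via $\iota$), $p_2$, $p_3$, and the low-index points, all of which survive forgetting the last point, while flatness of $\pi$ and $\pi^*\lambda_1=\lambda_1$ give the transport of relations. Your explicit check that the equality $\pi^{-1}(U_{n-1})=U_n$ fails at $n=3$ (where $\pi^{-1}(U_2)=\mg{1,3}\supsetneq U_3$) is a correct and worthwhile clarification of why the hypothesis $n\geq4$ appears.
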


\begin{definition}\,
For $n\geq3$,
define the morphism of stacks
$\sigma_{n-1}:U_{n-1}'\rightarrow\mg{1,n}$ by
$$
(C,p_i)\mapsto(C,p_1,p_2,\iota(p_2),p_3,\dots,p_{n-1}).
$$
\end{definition}

This map sheds light on why the loci in Definition \ref{U_n definition} were defined: the defining conditions
for $U_n'$ are precisely the conditions needed to insure that this map exists.

\begin{proposition}\label{sigma closed immersion}
For $n\geq3$, the map $\sigma_{n-1}:U'_{n-1}\rightarrow\mg{1,n}$ is a closed immersion.
\end{proposition}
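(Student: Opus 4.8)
The plan is to realize $\sigma_{n-1}$ as an isomorphism onto the closed substack $Z\subseteq\mg{1,n}$ cut out by the condition $p_2=\iota(p_3)$, by producing an explicit two-sided inverse. First I would check that $Z$ is genuinely closed: it is the locus where the two sections $p_2$ and $\iota\circ p_3$ of the universal curve $\mathcal C\to\mg{1,n}$ agree, i.e. the pullback of the diagonal along $(p_2,\iota\circ p_3):\mg{1,n}\to\mathcal C\times_{\mg{1,n}}\mathcal C$; since the universal curve is separated, this locus is closed. By construction $\sigma_{n-1}$ factors through $Z$: the inserted third point is $\iota(p_2)$, so writing the image as $(C,q_1,\dots,q_n)$ we have $q_3=\iota(q_2)$, equivalently $q_2=\iota(q_3)$, which is exactly the defining equation of $Z$.

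For the inverse I would use the forgetful morphism $f:\mg{1,n}\to\mg{1,n-1}$ that drops the third marked point (the underlying smooth elliptic curve is unchanged, so no stabilization is required and $f$ is defined for all $n\geq 3$). The heart of the argument is to show that $f$ carries $Z$ into the open substack $U'_{n-1}$, not merely into $\mg{1,n-1}$. Given an object $(C,q_1,\dots,q_n)$ of $Z$ with $q_3=\iota(q_2)$, its image under $f$ is $(C,q_1,q_2,q_4,\dots,q_n)$, whose second point is still $q_2$. For any index $j\in\{1,2,4,\dots,n\}$ of a surviving point, the equality $q_2=\iota(q_j)$ is equivalent, after applying the involution $\iota$, to $q_3=q_j$; since $j\neq 3$, the sections $q_3$ and $q_j$ are disjoint, so this never occurs. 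Thus every defining condition of $U'_{n-1}$ holds automatically on $Z$, and $f$ restricts to a morphism $f|_Z:Z\to U'_{n-1}$. It is precisely the distinctness of the marked points, transported through $\iota$, that forces each would-be bad locus $\{q_2=\iota(q_j)\}$ to be empty on $Z$.

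Finally I would observe that $\sigma_{n-1}$ and $f|_Z$ are mutually inverse: both are defined by manifestly functorial operations on families (insert $\iota(p_2)$ into the third slot, respectively delete the third section), and $f\circ\sigma_{n-1}$ recovers the original $(n-1)$-pointed curve while $\sigma_{n-1}\circ f|_Z$ reinserts $\iota(q_2)=q_3$, so each composite is the identity on the groupoid of families (in particular the two maps induce inverse isomorphisms on automorphism groups). Hence $\sigma_{n-1}:U'_{n-1}\xrightarrow{\sim}Z$ is an isomorphism onto a closed substack of $\mg{1,n}$, i.e. a closed immersion. I expect the only real obstacle to be the middle step: verifying that $f$ lands in $U'_{n-1}$ rather than landing merely in $\mg{1,n-1}$ with $\sigma_{n-1}$ then being only a locally closed immersion. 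Making this precise for an arbitrary base $S$—showing scheme-theoretically that the bad locus in $S$ where $q_2=\iota(q_j)$ coincides, via the isomorphism $\iota$, with the empty locus $\{q_3=q_j\}$—is exactly what guarantees that the image of $\sigma_{n-1}$ is the full divisor $Z$, and hence closed.
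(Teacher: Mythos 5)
Your proof is correct and runs on the same engine as the paper's: both use the forgetful map $\pi_3$ dropping the third marked point as the inverse to $\sigma_{n-1}$, and both identify the image with the closed locus $\{p_3=\iota(p_2)\}$. The paper packages this as ``section of a separated morphism, hence a locally closed immersion whose image is closed,'' routing through the auxiliary open $W=\{p_2\neq\iota(p_2)\}$, whereas you exhibit an explicit two-sided inverse onto that closed substack; your check that $\pi_3$ carries it into $U'_{n-1}$ (via $q_2=\iota(q_j)\Leftrightarrow q_j=q_3$, which is excluded by disjointness of the sections) is precisely the point the paper's diagram asserts without comment.
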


\begin{proof}
Let $W$ be the locus inside of $\mg{1,n}$ where $p_2\neq\iota(p_2)$.
Observe that $\sigma_{n-1}$ factors through this locus: the image of $\sigma_{n-1}$ is the locus where
$p_3=\iota(p_2)$, hence in particular $p_2\neq\iota(p_2)$.
Let $\pi_3:\mg{1,n}\rightarrow\mg{1,n-1}$ be the map which forgets the third marked point.
Then we have the following diagram:
$$
\begin{tikzcd}
W\arrow[r]\arrow[d, swap, "\pi_3"] & \mg{1,n}\\
U'_{n-1}\arrow[u, bend right, swap, "\sigma_{n-1}"]\arrow[ur, swap, "\sigma_{n-1}"] &
\end{tikzcd}
$$
where $\pi_3\circ\sigma_{n-1}=\id$. Therefore $\sigma_{n-1}:U'_{n-1}\rightarrow\mg{1,n}$
factors as a closed immersion into $W$ followed by an open immersion into $\mg{1,n}$. Since
its image in $\mg{1,n}$ is the closed locus of curves with $p_3=\iota(p_2)$, we see that
$\sigma_{n-1}:U'_{n-1}\rightarrow\mg{1,n}$ is a
closed immersion.
\end{proof}

\begin{corollary}
For $n\geq 3$, the stack $\mg{1,n}$ stratifies into the disjoint union
$$
\mg{1,n}=U_n\sqcup\im\sigma_{n-1}\cong U_n\sqcup U_{n-1}'.
$$
\end{corollary}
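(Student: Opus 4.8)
The plan is to realize the decomposition as the tautological splitting of $\mg{1,n}$ into an open substack and its closed complement, and then to identify that complement with the image of $\sigma_{n-1}$. First I would observe that $U_n$ is genuinely open: it is the complement of the locus $\{p_2 = \iota(p_3)\}$, which is closed, being the preimage of the diagonal under the pair of sections $p_2$ and $\iota(p_3)$ of the universal curve. Writing $Z_n := \mg{1,n}\setminus U_n$ for this closed complement, we immediately get the disjoint union $\mg{1,n} = U_n \sqcup Z_n$. Everything then reduces to the identification $Z_n = \im\sigma_{n-1}$, after which Proposition \ref{sigma closed immersion} supplies the isomorphism $U'_{n-1}\xrightarrow{\sim}\im\sigma_{n-1}$ (a closed immersion is an isomorphism onto its image), yielding the final $\cong U_n \sqcup U'_{n-1}$.

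For the identification $Z_n = \im\sigma_{n-1}$, one inclusion is free: by construction $\sigma_{n-1}$ inserts $\iota(p_2)$ as the third marked point, so every point of the image satisfies $p_3 = \iota(p_2)$, equivalently $p_2 = \iota(p_3)$ since $\iota$ is an involution; hence $\im\sigma_{n-1}\subseteq Z_n$. The content is the reverse inclusion, i.e.\ that the image is the \emph{entire} locus $\{p_2=\iota(p_3)\}$ and not a proper closed subset of it. Given a point $(C,p_1,\dots,p_n)\in Z_n$, the natural candidate preimage is $\pi_3(C,p_1,\dots,p_n) = (C,p_1,p_2,p_4,\dots,p_n)$, and a direct check shows $\sigma_{n-1}$ sends this back to $(C,p_1,p_2,\iota(p_2),p_4,\dots,p_n)$, which equals the original point since $\iota(p_2)=p_3$ on $Z_n$.

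The one step requiring care — and the main obstacle — is verifying that this candidate preimage actually lands in $U'_{n-1}$, i.e.\ that $p_2\neq\iota(p_j)$ for every remaining index $j\in\{1,2,4,5,\dots,n\}$. Here the key observation is that these inequalities come for free from the distinctness of the marked points on $Z_n$: if $p_2=\iota(p_j)$ for some $j\neq 3$, then applying $\iota$ gives $p_j=\iota(p_2)=p_3$, contradicting $p_j\neq p_3$; and $p_2=\iota(p_2)$ is impossible because $p_2\neq p_3=\iota(p_2)$. Thus every point of $Z_n$ automatically satisfies all the defining inequalities of $U'_{n-1}$ after forgetting $p_3$, so $\pi_3$ restricts to a map $Z_n\to U'_{n-1}$ inverse to $\sigma_{n-1}$, and the two inclusions combine to $Z_n=\im\sigma_{n-1}$. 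I expect no difficulty beyond bookkeeping once this distinctness argument is in place; the only subtlety worth a remark is that $U_n$ and $Z_n$ should be given complementary open and reduced-closed substack structures, so that the displayed splitting is an honest decomposition into locally closed substacks suitable for the excision sequence.
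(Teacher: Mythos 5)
Your proof is correct and takes essentially the same route the paper intends: the corollary is immediate once one knows $\im\sigma_{n-1}$ is exactly the closed locus $\{p_2=\iota(p_3)\}$, which the paper asserts inside the proof of Proposition \ref{sigma closed immersion} and which you verify explicitly via the distinctness-of-marked-points argument showing the forgetful map lands in $U'_{n-1}$. No gaps; you have simply spelled out the surjectivity onto the locus that the paper leaves implicit.
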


\begin{lemma}\label{U3 vector bundle}
The stack $U_3$ is isomorphic to an open substack of a vector bundle $\mathcal U_3$ over $B\mu_2$.
\end{lemma}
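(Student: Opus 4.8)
The plan is to extend the Weierstrass coordinates of Corollary \ref{m12 presentation} by one more marked point, and then to contract the structure group from $\mathbb G_m$ to $\mu_2$ using a coordinate that is nonvanishing on $U_3$. Following the conventions there, I would represent a point of $\mg{1,3}$ by a smooth Weierstrass curve $y^2=x^3+ax+b$ together with two finite points $p_2=(x_2,y_2)$ and $p_3=(x_3,y_3)$ (the first marked point $p_1$ being the point at infinity), all taken modulo the $\mathbb G_m$-scaling, which acts with weights $-4,-6,-2,-3,-2,-3$ on $a,b,x_2,y_2,x_3,y_3$. Lying on the curve imposes the relations $y_2^2=x_2^3+ax_2+b$ and $y_3^2=x_3^3+ax_3+b$.

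First I would observe that on $U_3$ the coordinates satisfy $x_2\neq x_3$. Indeed, if $x_2=x_3$ then $y_2^2=y_3^2$, so $y_2=\pm y_3$: the case $y_2=y_3$ gives $p_2=p_3$, which is impossible since the marked points of a curve in $\mg{1,3}$ are distinct, while the case $y_2=-y_3$ gives $p_2=(x_3,-y_3)=\iota(p_3)$, which is excluded by the definition of $U_3$. Conversely, $x_2\neq x_3$ already forces both $p_2\neq p_3$ and $p_2\neq\iota(p_3)$, so $U_3$ is cut out precisely by $x_2\neq x_3$ together with the smoothness condition $D\neq0$. The point of this is that, where $x_2\neq x_3$, subtracting the two defining relations lets me solve
\[
a=\frac{y_2^2-y_3^2}{x_2-x_3}-(x_2^2+x_2x_3+x_3^2),\qquad b=y_2^2-x_2^3-ax_2,
\]
so that $a$ and $b$ are determined by $(x_2,y_2,x_3,y_3)$. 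Hence the projection identifies $U_3$ with the $\mathbb G_m$-quotient of the open locus $\{x_2\neq x_3,\ D\neq0\}\subseteq\mathbb A^4_{x_2,y_2,x_3,y_3}$, where $D=4a^3+27b^2$ is now a weight-$12$ polynomial in these four coordinates via the formulas above.

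To pass from $B\mathbb G_m$ to $B\mu_2$ I would use the weight-$(-2)$ coordinate $w:=x_2-x_3$, which is everywhere nonzero on $U_3$. The assignment $(x_2,y_2,x_3,y_3)\mapsto w$ is a $\mathbb G_m$-equivariant map onto $\mathbb A^1\setminus0$ with its weight-$(-2)$ action, and since $\fieldchar\mathbbm k\neq2$ we have $[(\mathbb A^1\setminus0)/\mathbb G_m]\cong B\mu_2$. Taking the slice $w=1$ trivializes this and leaves the residual group $\mu_2=\{\pm1\}$ acting on the remaining free coordinates $(y_2,x_3,y_3)$ by $(y_2,x_3,y_3)\mapsto(-y_2,x_3,-y_3)$, the induced weights mod $2$. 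This exhibits an isomorphism $[\{x_2\neq x_3\}/\mathbb G_m]\cong[\mathbb A^3_{y_2,x_3,y_3}/\mu_2]=:\mathcal U_3$, the total space of a rank-$3$ linear $\mu_2$-representation, i.e. a vector bundle over $B\mu_2$; then $U_3$ is the open substack cut out by $D\neq0$.

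The two coordinate computations are routine. The conceptual crux, and the step I expect to require the most care, is the reduction of the structure group: verifying that the nonvanishing coordinate $w=x_2-x_3$ genuinely descends the $\mathbb G_m$-quotient to a $\mu_2$-quotient, that the slice $w=1$ is a faithful presentation of the quotient stack, and that the residual $\mu_2$-action is the one recorded above. This is precisely where the $B\mu_2$ (rather than $B\mathbb G_m$) of the statement comes from, and it sets up the $\mu_2$-equivariant Chow computation to follow.
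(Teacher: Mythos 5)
Your proposal is correct and follows essentially the same route as the paper: both solve for $a$ and $b$ in terms of the free coordinates $x_2,y_2,x_3,y_3$, identify $U_3$ with the locus $x_2\neq x_3$, $D\neq 0$, and obtain $B\mu_2$ from the weight-$(-2)$ action of $\mathbb G_m$ on $x_2-x_3$. The only cosmetic difference is that the paper presents $\mathcal U_3$ as a tower of vector bundles over $\left[(\mathbb A^1\setminus 0)/\mathbb G_m\right]\cong B\mu_2$, whereas you take the slice $w=1$ to exhibit it directly as $[\mathbb A^3/\mu_2]$.
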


\begin{proof}
A smooth three-pointed elliptic curve
is determined, up to scaling, by a choice of $(a,b)$, $p_2=(x_2,y_2)$, and $p_3=(x_3,y_3)$ such that
$$
y_i^2=x_i^3+ax_i+b\quad\text{and}\quad{D\neq0}.
$$
Solving for $b$ and then $a$ gives
$$
a=\frac{(y_3^2-x_3^3)-(y_2^2-x_2^3)}{x_3-x_2}.
$$
Therefore we see that $x_2,x_3,y_2,y_3$ may vary freely, provided $x_2\neq x_3$ and $D\neq 0$.
But the condition that $x_2\neq x_3$ is precisely the condition that $p_2$ and $p_3$ do not
overlap and are not involutions of each other
involutions of each other (the defining condition for $U_3$), and so $U_3$ is open inside of
$$
\mathcal U_3:=\left[\frac{\mathbb A^2_{y_i}\times(\mathbb A^2_{x_i}\setminus\Delta)}{\mathbb G_m}\right],
$$
where $\Delta$ is the diagonal and $\mathbb G_m$ acts with weight $-2$ on $x_i$ and $-3$ on $y_i$.
This is a vector bundle over
$$
\left[\frac{\mathbb A^2_{x_i}\setminus\Delta}{\mathbb G_m}\right]
$$
which is a vector bundle over
$$
\left[\frac{\mathbb A^1\setminus 0}{\mathbb G_m}\right]\cong B\mu_2
$$
since $\mathbb G_m$ acts with weight $-2$ on $x_i$.
\end{proof}

\begin{lemma}\label{U2' vector bundle}
The stack $\im\sigma_2$ is isomorphic to an open substack of a vector bundle
$\mathcal U_2'$ over $B\mu_3$.
\end{lemma}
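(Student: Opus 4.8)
The plan is to mirror the argument of Lemma \ref{U3 vector bundle}, trading the role of the $x$-coordinate for that of the $y$-coordinate so that the weight-$(-3)$ action produces $B\mu_3$ in place of $B\mu_2$. Since $\sigma_2$ is a closed immersion by Proposition \ref{sigma closed immersion}, $\im\sigma_2$ is isomorphic to its source, and so I would work directly with the objects it parametrizes, namely curves of the form $(C_{(a,b)},\infty,p_2,\iota(p_2))$. The whole content is to write these down in Weierstrass coordinates, as in Corollary \ref{m12 presentation}, and then recognize the resulting quotient stack as an open substack of a rank-$2$ vector bundle over $B\mu_3$.

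First I would use the Weierstrass form to record that if $p_2=(x_2,y_2)$ then $\iota(p_2)=(x_2,-y_2)$. The inserted third point is $p_3=\iota(p_2)$, so the requirement that the marked points $p_2$ and $p_3$ be distinct is \emph{exactly} the condition $y_2\neq0$; this is the crucial input and plays the role that $x_2\neq x_3$ played in Lemma \ref{U3 vector bundle}. Exactly as in Corollary \ref{m12 presentation}, a curve in $\im\sigma_2$ is determined up to scaling by the data $(a,x_2,y_2)$: solving $y_2^2=x_2^3+ax_2+b$ for $b$ gives $b=y_2^2-x_2^3-ax_2$, so that $a,x_2,y_2$ vary freely subject only to $y_2\neq0$ and $D\neq0$. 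Recalling that $\mathbb G_m$ acts with weights $-4,-2,-3$ on $a,x_2,y_2$, I would then set
$$
\mathcal U_2':=\left[\frac{\mathbb A^2_{a,x_2}\times(\mathbb A^1_{y_2}\setminus 0)}{\mathbb G_m}\right].
$$
Forgetting the fiber coordinates $a,x_2$ exhibits $\mathcal U_2'$ as a vector bundle over $[(\mathbb A^1_{y_2}\setminus 0)/\mathbb G_m]$, and since $\mathbb G_m$ acts on $\mathbb A^1\setminus 0$ with weight $-3$ the stabilizer of any point is $\mu_3$, so this base is $B\mu_3$. As $D$ is $\mathbb G_m$-homogeneous, the locus $D\neq0$ is open, and by the previous paragraph it is precisely $\im\sigma_2$, which completes the identification.

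The step requiring the most care is the bookkeeping of which open conditions survive after eliminating $b$: one must verify that the only surviving constraints are $y_2\neq0$ (distinctness of $p_2$ and $p_3=\iota(p_2)$) and $D\neq0$ (smoothness), and in particular that distinctness of $p_1$ from the remaining points imposes nothing further — this holds automatically since $p_1=\infty$ is at infinity while $p_2,p_3$ are affine, and the two-torsion degeneration $p_2=\iota(p_2)$ is already excluded by $y_2\neq0$. The only structural novelty relative to Lemma \ref{U3 vector bundle} is that the nonvanishing coordinate now carries weight $-3$, which is exactly what upgrades the base gerbe from $B\mu_2$ to $B\mu_3$.
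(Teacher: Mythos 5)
Your proposal is correct and follows essentially the same route as the paper: both identify $\im\sigma_2$ with $U_2'$ via the closed immersion $\sigma_2$, eliminate $b$ from the Weierstrass equation so that $a,x,y$ vary freely, observe that the defining condition becomes $y\neq 0$, and recognize $[(\mathbb A^2_{a,x}\times(\mathbb A^1_y\setminus 0))/\mathbb G_m]$ as a vector bundle over $[(\mathbb A^1_y\setminus 0)/\mathbb G_m]\cong B\mu_3$ because $y$ has weight $-3$. The only cosmetic difference is that the paper phrases the argument as restricting the presentation of $\mg{1,2}$ from Corollary \ref{m12 presentation} to the open locus $y\neq 0$, whereas you rederive that presentation directly; the content is identical.
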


\begin{proof}
Since $\im\sigma_2$ is isomorphic to the locus $U_2'$ in $\mg{1,2}$,
we just need to analyze two-pointed elliptic curves where $\iota(p_2)\neq p_2$.
Recall from Corollary \ref{m12 presentation} that $\mg{1,2}$ is open inside of a vector bundle over
$B\mathbb G_m$. More specifically, $\mg{1,2}$ is open inside of $[\mathbb A^3/\mathbb G_m]$ with coordinates
$a,x,y$. The condition that $\iota(p_2)\neq p_2$ is equivalent to the condition
$y\neq 0$, since $\iota(p_2)=\iota([x:y:z])=[x:-y:z]$. Therefore $U_2'$ is open inside of
$$
\mathcal U'_2:=\left[\frac{\mathbb A^3\setminus\{y=0\}}{\mathbb G_m}\right]\cong\left[
\frac{\mathbb A^2_{a,x}\times(\mathbb A^1_y\setminus0)}{\mathbb G_m}
\right]
$$
which is a vector bundle over
$$
\left[\frac{\mathbb A^1_y\setminus 0}{\mathbb G_m}\right]\cong B\mu_3,
$$
since $\mathbb G_m$ acts with weight $-3$ on $y$.
\end{proof}

Now we compute the integral Chow ring of $\mg{1,3}$ by first observing
that the vector bundles $\mathcal U_3$ and $\mathcal U_2'$
of the previous section naturally live inside of an enlargement of the stack
$\mgtilde{1,3}$, where the second and third marked points are not required to be in the nodal locus.
We patch those vector bundles together inside of this stack
using higher Chow groups with $\ell$-adic coefficients, and from there deduce $\CH(\mg{1,3})$.

\begin{lemma}
Let $\mathcal X=\mathcal U_3\sqcup\mathcal U_2'$. Then
$$\CH(\mathcal X)=\frac{\mathbb Z[x]}{(6x^2)}\quad\text{and}\quad\CH(\mathcal X,1;\mathbb Z_{\ell})=0
$$
for $\ell$ coprime to $\fieldchar\mathbbm k$.
\end{lemma}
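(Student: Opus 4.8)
The plan is to realize $\mathcal{U}_2'$ as a closed substack of $\mathcal{X}$ of codimension one with open complement $\mathcal{U}_3$, and to run the excision sequence
$$
\CH(\mathcal{U}_2')\xrightarrow{i_*}\CH(\mathcal{X})\xrightarrow{j^*}\CH(\mathcal{U}_3)\to 0.
$$
First I would record the two strata: by Lemma \ref{U3 vector bundle} and homotopy invariance, $\CH(\mathcal{U}_3)=\CH(B\mu_2)=\mathbb{Z}[x]/(2x)$, and by Lemma \ref{U2' vector bundle}, $\CH(\mathcal{U}_2')=\CH(B\mu_3)=\mathbb{Z}[x]/(3x)$, where in each case $x$ is the pullback of the generator $t\in\CH(B\mathbb{G}_m)$. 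In codimension $k$ the sequence reads $\CH^{k-1}(\mathcal{U}_2')\to\CH^k(\mathcal{X})\to\CH^k(\mathcal{U}_3)\to 0$. Right-exactness alone shows that $\CH(\mathcal{X})$ surjects onto $\CH(\mathcal{U}_3)$ with kernel a \emph{quotient} of $\CH(\mathcal{U}_2')$; to obtain the full torsion claimed, I must show that $i_*$ is injective.

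Establishing this injectivity (left-exactness) is the patching step and is where I expect the real work to lie. I would apply the $\ell$-adic left-exactness criterion of the previous section with $Z=\mathcal{U}_2'$, $X=\mathcal{X}$, $U=\mathcal{U}_3$. Its hypotheses are all mild here: $\CH(\mathcal{U}_2')=\mathbb{Z}[x]/(3x)$ and $\CH(\mathcal{U}_3)=\mathbb{Z}[x]/(2x)$ are finitely generated; $\CH(\mathcal{U}_2')\to\CH((\mathcal{U}_2')_{\bar k})$ is injective since the Chow ring of $B\mu_3$, hence of a vector bundle over it, is unchanged by base change; and since the only torsion in $\CH(\mathcal{U}_2')$ is $3$-torsion, I only need $\CH(\mathcal{U}_3,1;\mathbb{Z}_3)=0$. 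The latter holds because $\mathcal{U}_3$ is a vector bundle over $B\mu_2$ and $\CH(B\mu_2,1;\mathbb{Z}_\ell)=0$ for every $\ell$. This upgrades the sequence to short exact sequences $0\to\CH^{k-1}(\mathcal{U}_2')\xrightarrow{i_*}\CH^k(\mathcal{X})\xrightarrow{j^*}\CH^k(\mathcal{U}_3)\to 0$ in every degree, so $\CH^0(\mathcal{X})=\mathbb{Z}$ and $\CH^k(\mathcal{X})=\mathbb{Z}/6$ for $k\geq 2$.

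The two remaining subtleties are the degree-one extension and the ring structure. For $0\to\mathbb{Z}\xrightarrow{i_*}\CH^1(\mathcal{X})\to\mathbb{Z}/2\to 0$, I would identify $i_*(1)=[\mathcal{U}_2']$ with $2x$ (up to sign), using that $\mathcal{U}_2'$ is cut out in $\mathcal{X}$ by $x_2-x_3$, a function of weight $-2$ for the $\mathbb{G}_m$-action; as $2x$ then has infinite order and $x$ maps to the generator of $\CH^1(\mathcal{U}_3)=\mathbb{Z}/2$, this forces $\CH^1(\mathcal{X})=\langle x\rangle\cong\mathbb{Z}$. For the multiplicative structure I would show $x^k$ generates $\CH^k(\mathcal{X})=\mathbb{Z}/6$ for $k\geq2$ by assembling the restriction maps into $(j^*,i^*)\colon\CH^k(\mathcal{X})\to\CH^k(\mathcal{U}_3)\times\CH^k(\mathcal{U}_2')=\mathbb{Z}/2\times\mathbb{Z}/3$. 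This is injective—hence an isomorphism by order count—because on $\ker j^*=\im i_*$ the self-intersection formula gives $i^*i_*(\alpha)=\alpha\cdot c_1(N)$ with $c_1(N)=\pm 2\,x|_{\mathcal{U}_2'}$ a generator of $\CH^1(\mathcal{U}_2')$, so multiplication by $c_1(N)$ is an automorphism of $\mathbb{Z}/3$. Since $j^*(x^k)$ and $i^*(x^k)$ are the respective generators, $x^k$ is a generator, and the ring map $\mathbb{Z}[x]/(6x^2)\to\CH(\mathcal{X})$ sending $x\mapsto x$ is an isomorphism.

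Finally, for $\CH(\mathcal{X},1;\mathbb{Z}_\ell)=0$ I would invoke the localization long exact sequence for $\ell$-adic higher Chow groups,
$$
\CH(\mathcal{U}_2',1;\mathbb{Z}_\ell)\to\CH(\mathcal{X},1;\mathbb{Z}_\ell)\to\CH(\mathcal{U}_3,1;\mathbb{Z}_\ell),
$$
and note that both outer terms vanish, being the first $\ell$-adic higher Chow groups of vector bundles over $B\mu_3$ and $B\mu_2$ respectively. The middle term is then squeezed to $0$ for every $\ell$ coprime to $\fieldchar\mathbbm{k}$, completing the proof.
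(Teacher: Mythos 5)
Your proposal is correct and follows the same skeleton as the paper's proof: identify the two strata as vector bundles over $B\mu_2$ and $B\mu_3$, use the $\ell$-adic criterion to make the excision sequence left-exact (the paper checks the hypotheses in the same way you do), and squeeze $\CH(\mathcal X,1;\mathbb Z_\ell)$ between the two vanishing outer terms. You genuinely diverge in two sub-steps. First, for the extension $0\to\mathbb Z\to\CH^1(\mathcal X)\to\mathbb Z/2\to0$, the paper rules out splitting by restricting $x$ to $\mg{1,3}\subseteq\mathcal X$ and invoking $\Pic(\mg{1,3})\cong\mathbb Z/12$ (Theorem \ref{mg1n Picard group}), an external input; you instead compute $i_*(1)=[\mathcal U_2']=\pm2x$ from the weight of the defining equation $x_2-x_3$, which makes the non-splitting self-contained, since in a split extension the generator of the $\mathbb Z$-summand cannot be divisible by $2$. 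This does require the explicit presentation of $\mathcal X$ and the check that $\{x_2=x_3\}$ is the reduced divisor $\mathcal U_2'$ --- both hold, because the locus $p_2=p_3$ is excised from $\mathcal X$ and $a,x_2,y_2,x_3$ serve as local coordinates near $\mathcal U_2'$ --- so you trade a citation for a short local computation. Second, the paper stops at the additive identification $\CH^k(\mathcal X)\cong\mathbb Z/6$ for $k\geq2$; your $(j^*,i^*)$-injectivity argument via the self-intersection formula, using that $c_1(N)=\pm2x$ generates $\CH^1(\mathcal U_2')\cong\mathbb Z/3$, actually verifies that $x^k$ generates each graded piece, filling in a detail needed for the ring-level statement that the paper leaves implicit. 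One small nit: the vanishing $\CH(B\mu_2,1;\mathbb Z_\ell)=0$ holds for $\ell$ coprime to $\fieldchar\mathbbm k$, not literally for every $\ell$, but that is all your argument uses.
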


\begin{proof}
Recall that because $\mathcal U_3$ and $\mathcal U_2'$ are both quotients by $\mathbb G_m$
and vector bundles over $B\mu_2$ and $B\mu_3$, respectively, that their first higher Chow
groups with $\ell$-adic coefficients vanish for $\ell$ co-prime to $\fieldchar\mathbbm k$ and that
their Chow rings are
$$
\CH(\mathcal U_3)=\frac{\mathbb Z[x]}{(2x)}\quad\text{and}\quad\CH(\mathcal U_2')=\frac{\mathbb Z[x]}{(3x)}
$$
where in both rings $x$ denotes the pullback of the generator $x\in\CH(B\mathbb G_m)=\mathbb Z[x]$.

Consider the following diagram
$$
\begin{tikzcd}
\mathcal U_2'\arrow[r,"\sigma_2"] \arrow[dr,swap, "\pi_2"]&
\mathcal X\arrow[d,"\pi"] & \mathcal U_3\arrow[l,swap, "j"]\arrow[dl,"\pi_3"]\\
& B\mathbb G_m &
\end{tikzcd}
$$
Denote the pullback of $x\in\CH(B\mathbb G_m)$ to $\CH(\mathcal X)$ by $x$ as well, so that the pullback
of $x$ along any map is again $x$.

Since $\CH(\mathcal U_3,1;\mathbb Z_{\ell})$ vanishes for $\ell$ co-prime to $\fieldchar\mathbbm k$,
the excision sequence
for $\mathcal U_3$ and $\mathcal U_2'$ gives
$$
0\rightarrow
\CH(\mathcal U_2')\xrightarrow{\sigma_{2_*}}\CH(\mathcal X)\rightarrow\CH(\mathcal U_3)\rightarrow0
$$
$$
0\rightarrow\mathbb Z[x]/(3x)\xrightarrow{\sigma_{2_*}}\CH(\mathcal X)\rightarrow\mathbb Z[x]/(2x)\rightarrow 0.
$$
Moreover, since $\CH(\mathcal U_2',1;\mathbb Z_{\ell})=0$ for $\ell$ coprime to $\fieldchar\mathbbm k$,
we see that $\CH(\mathcal X,1;\mathbb Z_{\ell})=0$.

In all degrees $k\geq2$, the above sequence looks like
$$
0\rightarrow\mathbb Z/3\rightarrow\CH^k(\mathcal X)\rightarrow\mathbb Z/2\rightarrow0,
$$
and so $\CH^k(\mathcal X)\cong\mathbb Z/6\cong\CH^k(\mathbb Z[x]/(6x^2))$ for $k\geq2$.
In degree one the sequence looks like
$$
0\rightarrow\mathbb Z\xrightarrow{{\sigma_2}_*}\CH^1(\mathcal X)\xrightarrow{j^*}\mathbb Z/2\rightarrow0.
$$
We now have either $\CH^1(\mathcal X)\cong\mathbb Z$ or $\mathbb Z\oplus\mathbb Z/2$, and we seek to
show that $\CH^1(\mathcal X)\cong\mathbb Z$.

Since $j^*(x)=x$ in any case, if the sequence splits then $x$ has order 2 in $\CH^1(\mathcal X)$.
However the restriction of $x$ to $\mg{1,3}\subseteq\mathcal X$ is $\lambda_1$, which does not have
order 2 (in fact it has order 12, see Theorem \ref{mg1n Picard group}).

Therefore the sequence does not split, and we conclude
that
$$\CH(\mathcal X)=\frac{\mathbb Z[x]}{(6x^2)}.$$
\end{proof}

\begin{corollary}
The Chow ring of $\mg{1,3}$ is a quotient of $\mathbb Z[\lambda_1]/(6\lambda_1^2)$.
\end{corollary}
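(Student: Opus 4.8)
The plan is to exhibit $\mg{1,3}$ as an open substack of $\mathcal X$ and then read off the conclusion from the right-exactness of the excision sequence, which costs nothing. First I would unwind the two preceding lemmas: in Lemma \ref{U3 vector bundle} the open condition cutting $U_3$ out of $\mathcal U_3$ is only $D\neq 0$, since the non-overlapping condition $x_2\neq x_3$ is already built into $\mathcal U_3=\left[(\mathbb A^2_{y_i}\times(\mathbb A^2_{x_i}\setminus\Delta))/\mathbb G_m\right]$; likewise in Lemma \ref{U2' vector bundle} the open condition cutting $U_2'$ out of $\mathcal U_2'$ is only $D\neq 0$, the condition $y\neq 0$ having been absorbed into $\mathcal U_2'$. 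Taking the disjoint union of these two open immersions, and using the stratification $\mg{1,3}=U_3\sqcup\im\sigma_2\cong U_3\sqcup U_2'$, yields an open immersion
$$
\mg{1,3}=U_3\sqcup U_2'\hookrightarrow\mathcal U_3\sqcup\mathcal U_2'=\mathcal X,
$$
whose complement is the closed locus $Z=\{D=0\}$ of degenerate curves.

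I would then apply the excision sequence to this open immersion. Its right-exact portion,
$$
\CH(Z)\to\CH(\mathcal X)\to\CH(\mg{1,3})\to 0,
$$
holds with no further hypotheses, so the restriction $\CH(\mathcal X)\to\CH(\mg{1,3})$ is surjective. Combining this with the preceding lemma's computation $\CH(\mathcal X)=\mathbb Z[x]/(6x^2)$ presents $\CH(\mg{1,3})$ as a quotient of $\mathbb Z[x]/(6x^2)$. Finally, by Corollary \ref{Hodge bundle pullback} the class $x$ restricts to $\lambda_1$ on any moduli stack of pointed elliptic curves, so this surjection is exactly $\mathbb Z[\lambda_1]/(6\lambda_1^2)\twoheadrightarrow\CH(\mg{1,3})$, which is the claim.

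There is no real obstacle here: only the trivial right-exact half of excision is used, and the genuine content — pinning down the degree-one extension so that the ambient relation is exactly $6x^2$ rather than something coarser — was already dispatched in computing $\CH(\mathcal X)$. The one thing worth checking is that the isomorphism $\mg{1,3}\cong U_3\sqcup U_2'$ is compatible with the decomposition $\mathcal X=\mathcal U_3\sqcup\mathcal U_2'$, so that the disjoint union of the two open immersions $U_3\hookrightarrow\mathcal U_3$ and $U_2'\hookrightarrow\mathcal U_2'$ is genuinely an open immersion of $\mg{1,3}$ into $\mathcal X$; this holds by the very construction of $\mathcal X$ from the two strata, since $\sigma_2$ realizes $\im\sigma_2$ as the closed locus $p_3=\iota(p_2)$ and the passage from each $\mathcal U$ to its $U$ is cut out by the single equation $D\neq 0$.
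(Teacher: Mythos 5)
Your proposal is correct and follows the paper's own argument essentially verbatim: the paper likewise notes that $\mg{1,3}$ is open in $\mathcal X$ (with complement the degenerate locus), invokes the right-exact excision sequence together with the computation $\CH(\mathcal X)=\mathbb Z[x]/(6x^2)$, and identifies $x$ with $\lambda_1$ via Corollary \ref{Hodge bundle pullback}. Your extra verification that the two strata-wise open immersions assemble compatibly is a reasonable, if routine, addition.
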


\begin{proof}
This follows from the excision sequence, since $\mg{1,3}$ is open in $\mathcal X$, namely
it is the complement of the loci of cuspidal curves, nodal curves, and curves where the marked points
coincide. The fact that it is generated by $\lambda_1$ is a consequence of Corollary \ref{Hodge bundle pullback}.
\end{proof}

To conclude precisely which quotient, we use the following Theorem.

\begin{theorem}[\cite{FV20}]\label{mg1n Picard group}
The Picard group of $\mg{1,n}$ is isomorphic to $\mathbb Z/12$ for all $n$, generated by
the Hodge bundle.
\end{theorem}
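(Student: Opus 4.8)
The plan is to induct on $n$, taking the cases $n=1,2$ from Theorem \ref{n=1 Chow rings} (where $\Pic=\CH^1=\mathbb{Z}/12$ is generated by $\lambda_1$), and to run the inductive step through the universal curve. Forgetting the last marked point realizes $\mg{1,n}$ as an open substack of the universal elliptic curve $\pi:\mathcal{C}\to\mg{1,n-1}$: a point of $\mathcal{C}$ is an $(n-1)$-pointed curve together with an extra point $q\in E$, and $q$ extends the datum to an $n$-pointed curve exactly when $q\neq p_i$ for all $i$. Thus
$$\mg{1,n}=\mathcal{C}\setminus(\sigma_1\cup\cdots\cup\sigma_{n-1}),$$
where $\sigma_i\cong\mg{1,n-1}$ is the tautological section $q=p_i$. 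Using $\sigma_1$ as origin makes $\mathcal{C}$ a relative elliptic curve (an abelian scheme of relative dimension one) over the smooth stack $\mg{1,n-1}$, so that the full apparatus of relative Picard theory is available.

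Since each $\sigma_i$ is an integral codimension-one substack, the excision sequence in $\CH^1=\Pic$ reads
$$\bigoplus_{i=1}^{n-1}\mathbb{Z}\cdot[\sigma_i]\to\Pic(\mathcal{C})\to\Pic(\mg{1,n})\to0.$$
To identify $\Pic(\mathcal{C})$ I would use the relative Picard theory of the genus-one fibration with section: the zero section $\sigma_1$ splits the restriction sequence $0\to\pi^*\Pic(\mg{1,n-1})\to\Pic(\mathcal{C})\to\underline{\Pic}_{\mathcal{C}/\mg{1,n-1}}\to0$, and self-duality of the elliptic curve identifies the degree-zero part with $\mathcal{C}$ itself, giving
$$\Pic(\mathcal{C})\cong\pi^*\Pic(\mg{1,n-1})\oplus\mathbb{Z}\cdot[\sigma_1]\oplus\mathcal{C}(\mg{1,n-1}),$$
where $\mathcal{C}(\mg{1,n-1})$ is the group of sections of $\pi$ and, by the theorem of the cube, $[\sigma_i]$ corresponds to the pair $(\deg 1,\ \sigma_i)$. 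Quotienting by the $[\sigma_i]$ then kills the degree direction (via $[\sigma_1]$) and the subgroup $\langle\sigma_2,\dots,\sigma_{n-1}\rangle$ of sections (via the differences $[\sigma_i]-[\sigma_1]$), leaving
$$\Pic(\mg{1,n})\cong\pi^*\Pic(\mg{1,n-1})\oplus\big(\mathcal{C}(\mg{1,n-1})/\langle\sigma_1,\dots,\sigma_{n-1}\rangle\big).$$

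The main obstacle is to show that the second summand vanishes, i.e. that the universal Mordell–Weil group $\mathcal{C}(\mg{1,n-1})$ is generated by the tautological sections $\sigma_1=0,\sigma_2,\dots,\sigma_{n-1}$; this asserts that the generic $(n-1)$-pointed curve carries no exotic universal point. I expect the free part to be handled by a Mordell–Weil computation over the function field of $\mg{1,n-1}$ (the sections of the generic configuration are precisely the integral combinations of the marked points, with no relations among the $\sigma_i-\sigma_1$), and the absence of torsion sections to follow from monodromy: the image of the fundamental group of $\mg{1,1}$ in $\mathrm{SL}_2(\hat{\mathbb Z})$ acts on each $E[N]$ with no nonzero invariants, so no nonzero torsion point extends to a global section.

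Granting this, the induction closes quickly. Independently of the vanishing of the second summand, the displayed decomposition exhibits $\pi^*\Pic(\mg{1,n-1})$ as a direct summand, so $\pi^*$ is injective; since $\lambda_1=\pi^*\lambda_1$ by Corollary \ref{Hodge bundle pullback}, the pullback of the relation $12\lambda_1=0$ gives the upper bound while injectivity forces $\lambda_1$ to retain its order $12$ from $\mg{1,n-1}$. The vanishing of the second summand then upgrades this to $\Pic(\mg{1,n})\cong\pi^*\Pic(\mg{1,n-1})=\mathbb{Z}/12$, generated by $\lambda_1$. I would route the lower bound through injectivity of $\pi^*$ rather than through residual $B\mu_3$ and $B\mu_4$ gerbes as in Theorem \ref{n=1 Chow rings}, since for $n\geq3$ an elliptic curve with a $\mu_3$ or $\mu_4$ action has too few fixed points to carry that many labeled marked points, so no such gerbe embeds in $\mg{1,n}$.
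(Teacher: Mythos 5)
First, a point of comparison: the paper does not prove this statement at all --- it is quoted from \cite{FV20} and used as a black box. So your proposal is not competing with an internal argument; it is attempting to reprove the cited theorem. Your strategy (realize $\mg{1,n}$ as the complement of the tautological sections in the universal curve over $\mg{1,n-1}$, apply excision in $\CH^1$, and decompose $\Pic(\mathcal C)$ via relative Picard theory of an elliptic fibration with section) is a sensible and standard route, and the formal skeleton is essentially correct.

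However, there is a genuine gap, and it sits exactly where the theorem's content lives. Your argument reduces everything to the claim that the Mordell--Weil group $\mathcal C(\mg{1,n-1})$ is the free group generated by the tautological sections --- no torsion sections and no relations among $\sigma_2-\sigma_1,\dots,\sigma_{n-1}-\sigma_1$. You flag this as ``the main obstacle'' and then handle it by saying you ``expect'' it to follow from a Mordell--Weil computation over the function field and a monodromy argument. That expectation is the theorem: once it is granted, everything else is bookkeeping, so as written the proposal assumes what it needs to prove. Relatedly, your claim that injectivity of $\pi^*$ into $\Pic(\mg{1,n})$ holds ``independently of the vanishing of the second summand'' is not right: $[\sigma_i]$ has a component in $\pi^*\Pic(\mg{1,n-1})$ (e.g.\ $\sigma_1^*\mathcal O(\sigma_1)\cong\lambda_1^{-1}$), so a priori an integer combination $\sum n_i[\sigma_i]$ with $\sum n_i=0$ and $\sum n_i\sigma_i=0$ in the Mordell--Weil group could kill a multiple of $\lambda_1$ in the quotient; ruling this out already requires the independence of the sections. (The upper bound $12\lambda_1=0$ is the easy half; the order-$12$ lower bound for all $n$ is precisely what the citation supplies.) A minor factual slip in your closing remark: $\mg{1,3}$ \emph{does} contain a residual $B\mu_3$ gerbe, namely $(C_{(0,1)},\infty,[0:1:1],[0:-1:1])$, which the paper uses elsewhere; only the $\mu_4$ and $\mu_6$ gerbes disappear for $n\geq 3$.
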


\begin{theorem}
The integral Chow ring of $\mg{1,3}$ is
$$
\CH(\mg{1,3})=\frac{\mathbb Z[\lambda_1]}{(12\lambda_1, 6\lambda_1^2)}.
$$
\end{theorem}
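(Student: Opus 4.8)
The plan is to determine $\CH(\mg{1,3})$ one graded piece at a time. The preceding Corollary already produces a surjection $\mathbb{Z}[\lambda_1]/(6\lambda_1^2)\twoheadrightarrow\CH(\mg{1,3})$ with $\lambda_1$ generating the target as a ring, so each group $\CH^k(\mg{1,3})$ is cyclic, generated by $\lambda_1^k$: a quotient of $\mathbb{Z}$ in degree $1$ and a quotient of $\mathbb{Z}/6$ in degrees $k\geq2$. It therefore suffices to pin down the order of $\lambda_1^k$ in each degree.

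In degree $1$ I would simply invoke Theorem \ref{mg1n Picard group}. Since $\CH^1(\mg{1,3})=\Pic(\mg{1,3})\cong\mathbb{Z}/12$ with generator $\lambda_1$, this gives the relation $12\lambda_1=0$ and shows $\lambda_1$ has order exactly $12$. This is the crucial input invisible from $\mathcal X$, whose degree-$1$ group was torsion-free.

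For degrees $k\geq2$ the surjection already yields $6\lambda_1^k=0$, so it remains to establish the matching lower bound $6\mid\mathrm{ord}(\lambda_1^k)$. I would do this with residual gerbes, exactly as in the proof of Theorem \ref{n=1 Chow rings}. The three-pointed curve $(C_{(0,1)},\infty,(0,1),(0,-1))$ has automorphism group $\mu_3$, the only automorphisms of $C_{(0,1)}$ fixing the points $(0,\pm1)$ being the cube roots of unity, while $(C_{(-1,0)},\infty,(1,0),(-1,0))$ has automorphism group $\mu_2$; both are smooth, defined over $k$, and lie in $\mg{1,3}$. Their residual gerbes furnish maps $B\mu_n\to\mg{1,3}$ for $n=2,3$, and by Corollary \ref{Hodge bundle pullback} the induced pullbacks $\CH(\mg{1,3})\to\CH(B\mu_n)=\mathbb{Z}[x]/(nx)$ send $\lambda_1\mapsto x$. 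As $x^k$ has order $n$ in $\mathbb{Z}[x]/(nx)$ for every $k\geq1$, the class $\lambda_1^k$ has order divisible by both $2$ and $3$, hence by $6$; together with $6\lambda_1^k=0$ this forces $\CH^k(\mg{1,3})\cong\mathbb{Z}/6$.

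Finally I would assemble the pieces: the relations $12\lambda_1$ and $6\lambda_1^2$ lie in the kernel of $\mathbb{Z}[\lambda_1]\twoheadrightarrow\CH(\mg{1,3})$, and the ring $\mathbb{Z}[\lambda_1]/(12\lambda_1,6\lambda_1^2)$ has precisely the graded pieces $\mathbb{Z},\mathbb{Z}/12,\mathbb{Z}/6,\mathbb{Z}/6,\dots$ just computed, so the resulting surjection $\mathbb{Z}[\lambda_1]/(12\lambda_1,6\lambda_1^2)\twoheadrightarrow\CH(\mg{1,3})$ is a degreewise isomorphism. I expect the \emph{main obstacle} to be the degree-$\geq2$ lower bound, specifically exhibiting three-pointed curves with $\mu_2$ and $\mu_3$ automorphisms that are genuinely defined over $k$ and lie in $\mg{1,3}$. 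Note that no three-pointed curve can retain $\mu_4$ or $\mu_6$ automorphisms, since those groups fix too few points to accommodate three distinct markings; this is exactly why the gerbe method caps out at order $6$ and the jump to order $12$ in degree $1$ must come from the external input of Theorem \ref{mg1n Picard group}.
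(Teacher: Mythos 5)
Your proposal is correct and follows essentially the same route as the paper: the upper bound comes from the preceding Corollary (that $\CH(\mg{1,3})$ is a quotient of $\mathbb Z[\lambda_1]/(6\lambda_1^2)$), the degree-one piece is pinned down by the Picard group computation of Theorem \ref{mg1n Picard group}, and the higher degrees are forced to be $\mathbb Z/6$ by restricting to the residual gerbes of three-pointed curves with $\mu_2$ and $\mu_3$ automorphisms (the paper even uses the same $\mu_3$ example and an equally valid $\mu_2$ example on the same curve $C_{(-1,0)}$). Your write-up just makes explicit the degreewise bookkeeping that the paper leaves implicit.
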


\begin{proof}
The inclusion of any three-pointed curve with $\mu_2$ automorphisms, such as
$(C_{(-1,0)},\infty,[1:0:1],[0:0:1])$, or $\mu_3$ automorphisms, such as
$(C_{(0,1)},\infty,[0:1:1],[0:-1:1])$,
shows that
$\CH(\mg{1,3})$ surjects onto $\mathbb Z[x]/(2x)$ and $\mathbb Z[x]/(3x)$, respectively. Since
$\Pic(\mg{1,3})=\mathbb Z/12$, generated by $\lambda_1$, the theorem is proven.
\end{proof}

\begin{note}
Our argument can in fact be modified to not use higher Chow groups, in a similar fashion
as the argument for $\mg{1,2}$. However,
this version of the argument allows us to conclude that the first higher Chow group
of the stack $\mathcal X$ with $\ell$-adic coefficients vanishes, which may prove useful in future
computations.
\end{note}

\subsection{The case $4\leq n\leq 10$}
We first make an analogous definition of the \textit{tautological ring} in the integral case.
\begin{definition}
The \textit{integral tautological ring} of $\mg{1,n}$, written $\mathcal R(\mg{1,n})$, is
the subring of the Chow ring generated by $\lambda_1$.
\end{definition}
The remainder of this section has the following structure: first, we compute the integral tautological ring
of $\mg{1,n}$ for $n\geq4$, then we show that the full Chow ring is indeed generated by $\lambda_1$
for $4\leq n\leq 10$.

\begin{corollary}\label{Un Un' tautological ring}
For $n\geq3$:
\begin{enumerate}[label=(\alph*)]
\item the integral tautological ring $\mathcal R(U_n)$ is a quotient of $\mathbb Z[\lambda_1]/(2\lambda_1)$.
\item the integral tautological ring $\mathcal R(U_n')$ is equal to $\mathbb Z$.
\end{enumerate}
\end{corollary}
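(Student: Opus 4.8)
The plan is to establish each relation on a minimal base case and then spread it to all $n$ using the propagation already recorded in the Observation: a relation $a\lambda_1=0$ holding on $U_m$ (resp.\ $U_m'$) pulls back along $\pi^*$ to the same relation on the corresponding locus for every larger index, because $\pi^*\lambda_1=\lambda_1$.

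For part (a) I would take $U_3$ as the base case. By Lemma \ref{U3 vector bundle}, $U_3$ is open in the vector bundle $\mathcal U_3$ over $B\mu_2$, so the excision sequence makes the restriction $\CH(\mathcal U_3)\to\CH(U_3)$ surjective. Since $\CH(\mathcal U_3)=\mathbb Z[x]/(2x)$ and $x$ restricts to $\lambda_1$ by Corollary \ref{Hodge bundle pullback}, this shows $\CH(U_3)$, and in particular its subring $\mathcal R(U_3)$, is a quotient of $\mathbb Z[\lambda_1]/(2\lambda_1)$; equivalently $2\lambda_1=0$ on $U_3$. The Observation then yields $2\lambda_1=0$ on $U_n$ for every $n\geq3$, and since $\mathcal R(U_n)$ is by definition generated by $\lambda_1$, it is a quotient of $\mathbb Z[\lambda_1]/(2\lambda_1)$.

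For part (b) I would combine (a) with the $n=2$ computation of the primed locus. The containment $U_n'\subseteq U_n$ (the defining conditions of $U_n'$ include $p_2\neq\iota(p_3)$, which is exactly the condition cutting out $U_n$) together with part (a) gives $2\lambda_1=0$ on $U_n'$. On the other hand, Lemma \ref{U2' vector bundle} realizes $U_2'$ as open in the vector bundle $\mathcal U_2'$ over $B\mu_3$ with $\CH(\mathcal U_2')=\mathbb Z[x]/(3x)$, so $3\lambda_1=0$ on $U_2'$, and the Observation propagates this to $3\lambda_1=0$ on $U_n'$ for all $n\geq2$. Hence on $U_n'$ we have $\lambda_1=3\lambda_1-2\lambda_1=0$, so the subring generated by $\lambda_1$ is just $\mathbb Z$.

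The routine content is bookkeeping: checking the inclusion $U_n'\subseteq U_n$ and verifying that every restriction and $\pi$-pullback in sight carries $\lambda_1$ to $\lambda_1$, which is immediate since the Hodge bundle pulls back to the Hodge bundle. The one genuinely load-bearing point is the coprimality $\gcd(2,3)=1$: it is precisely because the weight-$(-2)$ datum (from $p_2\neq\iota(p_3)$, i.e.\ $x_2\neq x_3$) and the weight-$(-3)$ datum (from $p_2\neq\iota(p_2)$, i.e.\ $y_2\neq0$) have coprime weights that $\lambda_1$ collapses to zero on the primed locus. If one preferred a self-contained geometric proof of (b) rather than invoking (a), one could instead note that $U_3'$ sits inside $\left[\left(\mathbb A^1_{y_3}\times(\mathbb A^1_{y_2}\setminus0)\times\mathbb A^1\times(\mathbb A^1_{u}\setminus0)\right)/\mathbb G_m\right]$ with $u=x_2-x_3$, a vector bundle over the \emph{free} quotient $\left[\left((\mathbb A^1_{y_2}\setminus0)\times(\mathbb A^1_u\setminus0)\right)/\mathbb G_m\right]\cong\mathbb G_m$, whose Chow ring is $\mathbb Z$ with $\lambda_1=0$; but the restriction argument above is shorter and reuses (a) directly.
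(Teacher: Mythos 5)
Your proof is correct and follows essentially the same route as the paper: establish $2\lambda_1=0$ on $U_3$ via Lemma \ref{U3 vector bundle} and $3\lambda_1=0$ on $U_2'$ via Lemma \ref{U2' vector bundle}, propagate both relations forward using the Observation that $\pi^*\lambda_1=\lambda_1$, and combine them on $U_n'\subseteq U_n$ to force $\lambda_1=0$ there. The extra details you supply (the excision surjectivity $\CH(\mathcal U_3)\twoheadrightarrow\CH(U_3)$, the identification of $x$ with $\lambda_1$ via Corollary \ref{Hodge bundle pullback}, and the alternative free-quotient description of $U_3'$) are all consistent with the paper's argument, which simply leaves them implicit.
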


\begin{proof}
We showed in Lemma \ref{U3 vector bundle} that $2\lambda_1=0$ on $U_3$,
and so this relation holds on $U_n$ for all $n\geq 3$.
We also showed in Lemma \ref{U2' vector bundle}
that $3\lambda_1=0$ on $U_2'$, and so this relation holds
on $U_3'$ and hence on $U_n'$ for all $n\geq 3$.

Since $U_n'\subseteq U_n$ for all $n$, we see that for all $n\geq3$ both relations $2\lambda_1=0$ and
$3\lambda_1=0$ hold on $U_n'$. Therefore $\lambda_1=0$ on $U_n'$ for $n\geq 3$.
\end{proof}

\begin{lemma}
For all $n\geq4$, the integral tautological ring of
$\mg{1,n}$ is a quotient of $\mathbb Z[\lambda_1]/(12\lambda_1,2\lambda_1^2)$.
\end{lemma}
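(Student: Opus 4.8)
The plan is to exhibit the two relations $12\lambda_1=0$ and $2\lambda_1^2=0$ directly as identities in $\CH(\mg{1,n})$. Since $\mathcal R(\mg{1,n})$ is by definition the image of the ring map $\mathbb Z[t]\to\CH(\mg{1,n})$ sending $t\mapsto\lambda_1$, once both relations are verified this map factors through $\mathbb Z[t]/(12t,2t^2)$, which is exactly the assertion. The first relation is immediate: by Theorem \ref{mg1n Picard group} we have $\Pic(\mg{1,n})\cong\mathbb Z/12$ generated by $\lambda_1$, so $12\lambda_1=0$ in $\CH^1(\mg{1,n})$. All the work is therefore in producing the relation $2\lambda_1^2=0$.

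For that relation I would run the localization sequence attached to the stratification $\mg{1,n}=U_n\sqcup\im\sigma_{n-1}$. Using the closed immersion $\sigma_{n-1}:U_{n-1}'\to\mg{1,n}$ of Proposition \ref{sigma closed immersion} with open complement $U_n$, the excision sequence reads
$$
\CH(U_{n-1}')\xrightarrow{{\sigma_{n-1}}_*}\CH(\mg{1,n})\to\CH(U_n)\to0.
$$
I only need exactness in the middle term, not any left-exactness, so no higher-Chow input is required at this stage. By Corollary \ref{Un Un' tautological ring}(a) we have $2\lambda_1=0$ in $\CH(U_n)$, and since $\lambda_1$ restricts to $\lambda_1$ under the open immersion (Corollary \ref{Hodge bundle pullback}), the class $2\lambda_1\in\CH(\mg{1,n})$ maps to zero in $\CH(U_n)$. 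By exactness it therefore lies in the image of ${\sigma_{n-1}}_*$, say $2\lambda_1={\sigma_{n-1}}_*(\gamma)$ for some $\gamma\in\CH(U_{n-1}')$.

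Now I would multiply by $\lambda_1$ and apply the projection formula for the proper map $\sigma_{n-1}$:
$$
2\lambda_1^2=\lambda_1\cdot{\sigma_{n-1}}_*(\gamma)={\sigma_{n-1}}_*\bigl(\sigma_{n-1}^*\lambda_1\cdot\gamma\bigr).
$$
The crucial point is that $\sigma_{n-1}^*\lambda_1=0$: the Hodge bundle pulls back to the Hodge bundle (Corollary \ref{Hodge bundle pullback}), so $\sigma_{n-1}^*\lambda_1$ is the class $\lambda_1$ on $U_{n-1}'$, which vanishes because $\mathcal R(U_{n-1}')=\mathbb Z$ by Corollary \ref{Un Un' tautological ring}(b). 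Hence $2\lambda_1^2={\sigma_{n-1}}_*(0)=0$, completing the argument.

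The hypothesis $n\geq4$ enters precisely at this last step: Corollary \ref{Un Un' tautological ring}(b) furnishes $\lambda_1=0$ on $U_m'$ only for $m\geq3$, so I need $n-1\geq3$. (For $n=3$ the relevant closed stratum is $U_2'$, on which only $3\lambda_1=0$, and indeed $\mg{1,3}$ satisfies the weaker relation $6\lambda_1^2=0$ rather than $2\lambda_1^2=0$.) I expect the main obstacle to be conceptual rather than computational: one must recognize that the vanishing of $2\lambda_1^2$ is \emph{forced} by pushing the relation $2\lambda_1=0$ off the open stratum $U_n$ and then annihilating the resulting pushforward on the closed stratum via the projection formula, so that no direct computation of $\CH(\mg{1,n})$ is needed and the vanishing of $\lambda_1$ on $U_{n-1}'$ does all the work.
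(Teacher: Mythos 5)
Your proof is correct, and it rests on the same stratification $\mg{1,n}=U_n\sqcup\im\sigma_{n-1}$ and the same inputs (Corollary \ref{Un Un' tautological ring} and Theorem \ref{mg1n Picard group}) as the paper's argument. The difference is in how the relation $2\lambda_1^2=0$ is extracted from the excision sequence. The paper passes to a sequence of tautological rings $\mathcal R(U_{n-1}')\to\mathcal R(\mg{1,n})\to\mathcal R(U_n)\to0$, observes that the image of the first map is concentrated in degree one because $\mathcal R(U_{n-1}')=\mathbb Z$, and concludes that $\mathcal R^k(\mg{1,n})$ is $2$-torsion for $k\geq2$. You instead stay with the honest excision sequence of full Chow groups, lift $2\lambda_1$ to a class ${\sigma_{n-1}}_*(\gamma)$, and kill $2\lambda_1^2$ via the projection formula using $\sigma_{n-1}^*\lambda_1=0$ on $U_{n-1}'$. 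Your route is arguably the more careful one: middle-exactness of the excision sequence only tells you that a tautological class vanishing on $U_n$ is pushed forward from $\CH(U_{n-1}')$, not necessarily from the tautological subring $\mathcal R(U_{n-1}')$, so the paper's displayed sequence of tautological rings requires a small extra justification that your projection-formula step sidesteps entirely (and note your argument does not even need $\gamma$ to lie in degree zero, since $\sigma_{n-1}^*\lambda_1\cdot\gamma=0$ for any $\gamma$). The projection-formula device is the same one the paper itself deploys later in Proposition \ref{V4 V4' Chow ring}, and your accounting of where $n\geq4$ enters is exactly right.
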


\begin{proof}
Since by Corollary \ref{Un Un' tautological ring} the tautological ring of $U_n$ is a quotient of
$\mathbb Z[\lambda_1]/(2\lambda_1)$, we can write
$$
\mathcal R(U_n)=\frac{\mathbb Z[\lambda_1]/(2\lambda_1)}I
$$
for some ideal $I$.
The excision sequence for $\im\sigma_{n-1}\cong U_{n-1}'$ gives
$$
\mathcal R(U_{n-1}')\rightarrow\mathcal R(\mg{1,n})\rightarrow\mathcal R(U_n)\rightarrow0
$$
$$
\mathbb Z\rightarrow\mathcal R(\mg{1,n})\rightarrow\frac{\mathbb Z[\lambda_1]/(2\lambda_1)} I\rightarrow0.
$$
Since the image of the morphism lands in degree one and the Picard group of $\mg{1,n}$
is known to be $\mathbb Z/12$, the lemma follows.
\end{proof}

\begin{proposition}\label{mg14 tautological ring}
The integral tautological ring of $\mg{1,4}$ is
$$
\mathcal R(\mg{1,4})=\frac{\mathbb Z[\lambda_1]}{(12\lambda_1, 2\lambda_1^2)}.
$$
\end{proposition}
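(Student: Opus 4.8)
The previous lemma already supplies the upper bound: $\mathcal R(\mg{1,4})$ is a quotient of $\mathbb Z[\lambda_1]/(12\lambda_1,2\lambda_1^2)$, so in each degree $d\geq 2$ the group $\mathcal R^d(\mg{1,4})$ is a quotient of $\mathbb Z/2$, while in degree $1$ it is a quotient of $\mathbb Z/12$. My plan is therefore to establish the matching lower bounds. Degree one is immediate: since $\Pic(\mg{1,4})=\mathbb Z/12$ is generated by $\lambda_1$ (Theorem \ref{mg1n Picard group}), we have $\mathcal R^1(\mg{1,4})=\mathbb Z/12$. It then remains to show that $\lambda_1^d\neq 0$ for every $d\geq 2$; together with the upper bound this forces $\mathcal R^d(\mg{1,4})=\mathbb Z/2$ in those degrees, and assembling all the graded pieces yields the claimed presentation.

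To detect the classes $\lambda_1^d$ I would, exactly as in the proof for $\mg{1,3}$, restrict to the residual gerbe of a well-chosen special curve. The key is to exhibit a four-pointed elliptic curve whose automorphism group is precisely $\mu_2$. Take $C_{(-1,0)}$, with Weierstrass equation $y^2z=x^3-xz^2$, and mark the four $2$-torsion points $p_1=\infty$, $p_2=[1:0:1]$, $p_3=[0:0:1]$, $p_4=[-1:0:1]$. The involution $\iota\colon[x:y:z]\mapsto[x:-y:z]$ fixes each of these points, so it is an automorphism of the \emph{ordered} pointed curve, giving a copy of $\mu_2$. This produces a residual gerbe $B\mu_2\to\mg{1,4}$ and hence a ring homomorphism $\CH(\mg{1,4})\to\CH(B\mu_2)=\mathbb Z[x]/(2x)$. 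Since $\iota^*(dx/y)=-dx/y$, the Hodge bundle restricts to the sign representation of $\mu_2$, whose first Chern class generates $\CH^1(B\mu_2)$; equivalently, by Corollary \ref{Hodge bundle pullback} the class $\lambda_1$ maps to the generator $x$. Consequently $\lambda_1^d\mapsto x^d$, which is nonzero in $\mathbb Z[x]/(2x)$ for every $d$, so $\lambda_1^d\neq 0$ in $\CH(\mg{1,4})$, and in particular in $\mathcal R(\mg{1,4})$.

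The one point demanding care is to confirm that the chosen pointed curve really has automorphism group $\mu_2$ rather than something larger. The curve $C_{(-1,0)}$ has $j=1728$, so $\mathrm{Aut}(C_{(-1,0)},\infty)=\mu_4$, generated by $\phi\colon[x:y:z]\mapsto[-x:\sqrt{-1}\,y:z]$. However $\phi$ sends $p_2=[1:0:1]$ to $p_4=[-1:0:1]$, so it does not preserve the labeling, and the subgroup fixing all four marked points is exactly $\langle\phi^2\rangle=\langle\iota\rangle=\mu_2$. This is the expected phenomenon, and records conceptually why the answer differs from $\mg{1,3}$: the $2$-torsion subgroup has exactly four points, so four labeled points can all be fixed by $\iota$, whereas an order-three automorphism fixes only three points and hence cannot be realized on four labeled points, which is precisely why the factor of $3$ appearing in $\CH(\mg{1,3})$ is absent here. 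Granting the automorphism computation, combining degrees $0$, $1$, and $\geq 2$ gives $\mathcal R(\mg{1,4})=\mathbb Z[\lambda_1]/(12\lambda_1,2\lambda_1^2)$, as desired.
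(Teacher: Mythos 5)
Your proof is correct and takes essentially the same approach as the paper: the upper bound comes from the preceding lemma, and the lower bound comes from restricting to the residual gerbe $B\mu_2$ of the four-pointed curve $(C_{(-1,0)},\infty,[1:0:1],[0:0:1],[-1:0:1])$, on which $\lambda_1$ restricts to the generator of $\CH^1(B\mu_2)=\mathbb Z/2$, so that $\lambda_1^d\neq0$ for all $d$. The only cosmetic difference is that the paper routes the surjection onto $\mathbb Z[x]/(2x)$ through $\CH(U_4)$ to first conclude $\mathcal R(U_4)=\mathbb Z[\lambda_1]/(2\lambda_1)$, whereas you map directly out of $\CH(\mg{1,4})$.
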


\begin{proof}
Observe that by Appendix \ref{autappendix} there still exists four-pointed smooth elliptic
curves with $\mu_2$-automorphisms: $n=4$ is the largest $n$ for which such a curve exists, and all
such curves have $\mu_2$-automorphisms, generated by the involution.
Moreover, such a curve is necessarily contained inside
of $U_4$, the locus where the second and third points are not involutions of each other,
since each marked point is fixed by the involution.
Therefore we get a surjection
$$
\CH(U_4)\twoheadrightarrow\mathbb Z[x]/(2x).
$$
However,
since the degree one generator of $\CH(U_4)$ is $\lambda_1$, this morphism in fact factors as
$$
\begin{tikzcd}
\mathcal R(U_4)\arrow[r, hookrightarrow]\arrow[rr, bend left, twoheadrightarrow] &
\CH(U_4)\arrow[r, twoheadrightarrow] & \mathbb Z[x]/(2x)
\end{tikzcd}
$$
Therefore $\mathcal R(U_4)=\mathbb Z[\lambda_1]/(2\lambda_1)$, and so
$\mathcal R(\mg{1,4})=\mathbb Z[\lambda_1]/(12\lambda_1, 2\lambda_1^2)$.
\end{proof}

Before we can compute the integral tautological ring for $n\geq5$, we must analyze
$\mg{1,4}$ more thoroughly.

\begin{definition}
Let $Z_n\subseteq\mg{1,n}$ be the locus of curves with non-trivial automorphisms.
\end{definition}

\begin{observation}\label{automorphism observation}
Since $\mg{1,4}\setminus Z_4$ is a four-dimensional variety, we must have $\lambda_1^5=0$ on this locus,
and hence on any locus inside of it. Moreover, observe that every curve in $Z_4$ must have
$p_2$, $p_3$, and $p_4$ collinear: the only four-pointed smooth elliptic curves with automorphisms
are the ones where $y_i=0$ for $i=2,3,4$, and hence $p_2$, $p_3$, and $p_4$ lie on the line $y=0$
(see Appendix \ref{autappendix}).
\end{observation}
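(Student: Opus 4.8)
The plan is to handle the two assertions of the observation separately: first the dimension statement, which yields $\lambda_1^5=0$, and then the automorphism classification, which yields the collinearity.

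For the first assertion, I would begin by recording that $\dim\mg{1,n}=n$. This is immediate from the Weierstrass presentations: $\mg{1,1}$ is open in the one-dimensional quotient $[(\mathbb A^2\setminus V(D))/\mathbb G_m]$, and each forgetful map $\mg{1,n}\to\mg{1,n-1}$ is a curve fibration, so $\dim\mg{1,4}=4$. The locus $Z_4$ of curves with nontrivial automorphisms is closed, since the locus with nontrivial inertia is closed in a separated Deligne--Mumford stack with finite inertia; hence $\mg{1,4}\setminus Z_4$ is an open substack with trivial stabilizers everywhere, and so is an algebraic space, i.e.\ a four-dimensional variety. For such a variety the Chow groups vanish above the dimension, so $\CH^5(\mg{1,4}\setminus Z_4)=0$ and therefore $\lambda_1^5=0$ there. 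To propagate this to an arbitrary sublocus $V\subseteq\mg{1,4}\setminus Z_4$, I would use that the inclusion induces a ring homomorphism on Chow rings carrying $\lambda_1$ to $\lambda_1$ (Corollary \ref{Hodge bundle pullback}), so that $\lambda_1^5$ restricts to $0$ on $V$.

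For the collinearity, the key is to pin down which four-pointed smooth elliptic curves carry a nontrivial automorphism, and I would do this in Weierstrass coordinates. The Weierstrass form shows that an automorphism of $(C_{(a,b)},p_1,\dots,p_4)$ fixing the origin $p_1=\infty$ is a scaling by some root of unity $t$ (with $t^4=1$ if $b=0$, with $t^6=1$ if $a=0$, and with $t=\pm1$ otherwise), acting on a finite marked point by $[x_i:y_i:1]\mapsto[t^{-2}x_i:t^{-3}y_i:1]$. Such an automorphism of the \emph{pointed} curve must fix each of $p_2,p_3,p_4$. The involution $t=-1$ fixes $[x_i:y_i:1]$ precisely when $y_i=0$, and it fixes the full two-torsion, hence four points counting $\infty$. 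The heart of the argument, deferred to Appendix \ref{autappendix}, is the fixed-point count for the remaining automorphisms: on the special curves with $j=0$ or $j=1728$, every scaling of order $3$, $4$, or $6$ forces $x_i=0$ (resp.\ $x_i=y_i=0$) at a fixed marked point and therefore fixes strictly fewer than four points of the curve. Consequently no automorphism other than the involution can fix four distinct marked points, so any curve in $Z_4$ is fixed by the involution, forcing $y_2=y_3=y_4=0$; these three points then lie on the line $y=0$ and are collinear.

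The main obstacle is the fixed-point bookkeeping of the last paragraph: one must verify, case by case over the special $j$-invariants $j=0$ and $j=1728$, that every nontrivial automorphism apart from the involution fixes fewer than four points, so that four distinct marked points can be simultaneously fixed only by the involution. This is exactly the content packaged in Appendix \ref{autappendix}; once it is in hand, both halves of the observation follow immediately.
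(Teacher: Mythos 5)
Your proposal is correct and follows essentially the same route as the paper: the paper's observation likewise rests on the dimension count for $\mg{1,4}\setminus Z_4$ (forcing $\CH^5$ to vanish) together with the case-by-case fixed-point analysis of Appendix \ref{autappendix}, which shows that only the involution can fix four marked points and that its finite fixed points all lie on $y=0$. You supply somewhat more detail than the paper does (closedness of $Z_4$, the restriction map carrying $\lambda_1$ to $\lambda_1$), but the substance is identical.
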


We now give a second stratification of $\mg{1,n}$ for $n\geq3$ as follows:
\begin{itemize}
\item the open locus where $p_2$, $p_3$, and $p_4$ are not colinear under the Weierstrass embedding.

\item and the divisor where $p_2$, $p_3$, and $p_4$ \textit{are} colinear.
\end{itemize}

\begin{definition}\label{V_n definition} For $n\geq 3$ define the loci:
\begin{enumerate}[label=(\alph*)]
\item Let $V_n\subseteq\mg{1,n}$ be the locus where $p_2+p_3\neq\iota(p_4)$.

\item Let $V_n'\subseteq\mg{1,n}$ be the locus where $p_2+p_3\neq\iota(p_i)$ for any $i=1,\dots,n$.
\end{enumerate}
\end{definition}

\begin{observation}
As before, $\pi$ pulls relations back to relations.
\end{observation}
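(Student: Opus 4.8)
The plan is to reproduce, in both structure and spirit, the proof of the earlier observation for $U_n$ and $U_n'$. The essential input is the set-theoretic compatibility of the forgetful morphism $\pi:\mg{1,n+1}\to\mg{1,n}$ with the new strata. First I would record that, since $\mg{1,n}$ parametrizes \emph{smooth} pointed curves, $\pi$ merely deletes the last marked point $p_{n+1}$: no contraction or stabilization occurs, because the underlying elliptic curve together with $p_1,\dots,p_n$ is carried along unchanged (deleting a point from a smooth $n$-pointed elliptic curve still leaves a stable curve, as $p_1$ remains). Consequently the group-law relation $p_2+p_3=\iota(p_4)$, equivalently the collinearity of $p_2,p_3,p_4$ under the Weierstrass embedding, depends only on $p_2,p_3,p_4$ and is untouched by forgetting $p_{n+1}$. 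This yields $\pi(V_{n+1})\subseteq V_n$, and, once the forgotten point is distinct from $p_2,p_3,p_4$, the sharper equality $\pi^{-1}(V_{n-1})=V_n$ for $n\geq 5$.

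For the primed loci I would observe that the defining conditions of $V_{n+1}'$ are precisely $p_2+p_3\neq\iota(p_i)$ for $i=1,\dots,n+1$; forgetting $p_{n+1}$ drops exactly the condition indexed by $i=n+1$ and leaves the defining conditions of $V_n'$, so $\pi(V_{n+1}')\subseteq V_n'$. These containments furnish restricted forgetful maps $V_{n+1}\to V_n$ and $V_{n+1}'\to V_n'$, and hence pullback homomorphisms $\pi^*$ on Chow rings. To finish, I would invoke Corollary \ref{Hodge bundle pullback}: the Hodge bundle is compatible with $\pi$, so $\pi^*(\lambda_1)=\lambda_1$. Therefore any tautological relation $a\lambda_1^k=0$ holding on $V_m$ (resp.\ $V_m'$) pulls back, by iterating $\pi^*$ and using $\pi^*(\lambda_1)=\lambda_1$, to the identical relation on $V_n$ (resp.\ $V_n'$) for all $n\geq m$. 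This is exactly the content of the observation.

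I expect no genuine obstacle here; the statement is a direct analogue of the already-established fact for $U_n$ and $U_n'$. The only points demanding care are the index bookkeeping, namely confirming that the forgotten point $p_{n+1}$ is distinct from $p_2,p_3,p_4$ (which forces the range $n\geq 5$ for the reverse inclusion $\pi^{-1}(V_{n-1})=V_n$, while the one-sided containment $\pi(V_{n+1})\subseteq V_n$ holds for all relevant $n$), and the degenerate case $n=3$ for $V_n$, where the inequality involving the nonexistent $p_4$ must be read as vacuous, so that $V_3=\mg{1,3}$, handled exactly as $U_2=\mg{1,2}$ was treated earlier.
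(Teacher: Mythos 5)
Your proposal is correct and matches the paper's (implicit) argument: the paper offers no separate proof here, simply pointing back to the earlier observation for $U_n$ and $U_n'$, and your verification that $\pi(V_{n+1})\subseteq V_n$, $\pi(V_{n+1}')\subseteq V_n'$ (since the defining condition $p_2+p_3\neq\iota(p_i)$ only loses the $i=n+1$ clause) together with $\pi^*(\lambda_1)=\lambda_1$ is exactly the intended content. The index bookkeeping and the vacuous reading of the condition for $V_3$ are handled correctly.
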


\begin{definition}\,
Define the morphism of stacks $\tau_{n-1}:V_{n-1}'\rightarrow\mg{1,n}$ by
$$
(C,p_i)\mapsto(C,p_1,p_2,p_3,\iota(p_2+p_3),\dots,p_{n-1}).
$$
\end{definition}

\begin{proposition}
The map $\tau_{n-1}:V_{n-1}'\rightarrow\mg{1,n}$ is a closed immersion.
\end{proposition}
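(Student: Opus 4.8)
The plan is to follow the proof of Proposition \ref{sigma closed immersion} essentially verbatim, with the role of the forgetful map $\pi_3$ now played by the map $\pi_4:\mg{1,n}\to\mg{1,n-1}$ forgetting the fourth marked point, and with the incidence condition $p_3=\iota(p_2)$ replaced by $p_4=\iota(p_2+p_3)$. First I would check that $\tau_{n-1}$ is well-defined, i.e. that the inserted point $\iota(p_2+p_3)$ is genuinely a new marked point, distinct from $p_1,\dots,p_{n-1}$. Since $\iota$ is an involution, the defining condition of $V_{n-1}'$, namely $p_2+p_3\neq\iota(p_i)$ for all $i$, is equivalent to $\iota(p_2+p_3)\neq p_i$ for all $i$; this is exactly what is needed, and (as with $\sigma$) it is the reason $V_{n-1}'$ was defined this way.

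The key structural observation is that $\pi_4\circ\tau_{n-1}=\id_{V_{n-1}'}$, because forgetting the fourth point $\iota(p_2+p_3)$ of $\tau_{n-1}(C,p_i)$ returns $(C,p_1,p_2,p_3,p_4,\dots,p_{n-1})$. Thus $\tau_{n-1}$ is a section of $\pi_4$ over $V_{n-1}'$: its image lands in the open substack $W:=\pi_4^{-1}(V_{n-1}')\subseteq\mg{1,n}$, and as a map $V_{n-1}'\to W$ it is a section of the representable separated morphism $\pi_4|_W:W\to V_{n-1}'$. Since a section of a representable separated morphism is a closed immersion (it is a base change of the diagonal), $\tau_{n-1}$ is a closed immersion into the open $W$, hence a locally closed immersion into $\mg{1,n}$.

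It then remains to upgrade this to a closed immersion by showing the image is closed in all of $\mg{1,n}$. The image is contained in the locus $\{p_4=\iota(p_2+p_3)\}$, equivalently $p_2+p_3+p_4=0$ in the relative group law with identity $p_1$; this is the collinearity locus of Observation \ref{automorphism observation}, and it is closed, being the preimage of the identity section under the summation morphism $p_2+p_3+p_4$ into the universal curve, which is closed since the curve is separated. I would then argue the image is all of this closed locus, not merely a locally closed piece: for a point of $\{p_4=\iota(p_2+p_3)\}$ in $\mg{1,n}$, the relation $p_2+p_3=\iota(p_j)$ is equivalent to $p_4=p_j$, which fails for $j\neq 4$ because the marked points of $\mg{1,n}$ are always distinct; hence after forgetting $p_4$ the point automatically satisfies the open conditions cutting out $V_{n-1}'$, so it lies in $W$ and in the image of $\tau_{n-1}$. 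A locally closed immersion with closed image is a closed immersion, which finishes the proof.

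The main obstacle I anticipate is precisely this final step: confirming that the image equals the \emph{entire} closed locus $\{p_4=\iota(p_2+p_3)\}$ rather than a proper open subset of it. The crux is the small but essential point that, on $\mg{1,n}$, distinctness of the marked points forces the $V'$-type open conditions to hold automatically along the incidence locus, so that the naively locally closed image is in fact closed. I would also want to record cleanly that $\pi_4$ is representable and separated—being fibrewise an open substack of the separated universal curve—so that the ``section of a separated morphism'' principle applies.
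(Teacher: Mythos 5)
Your proof is correct and takes essentially the same route as the paper, whose own proof is just ``Similar to Proposition \ref{sigma closed immersion}'': realize $\tau_{n-1}$ as a section of the separated, representable forgetful map over an open substack $W$, hence a closed immersion into $W$, and then observe that its image is the closed locus $\{p_4=\iota(p_2+p_3)\}$ in all of $\mg{1,n}$. Your explicit check that distinctness of the marked points forces the image to be the \emph{entire} closed incidence locus is a detail the paper leaves implicit, but it is not a departure in approach.
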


\begin{proof}
Similar to Proposition \ref{sigma closed immersion}.
\end{proof}

\begin{corollary}
For $n\geq 3$, the stack $\mg{1,n}$ stratifies into the disjoint union
$$
\mg{1,n}=V_n\sqcup\im\tau_{n-1}\cong V_n\sqcup V_{n-1}'.
$$
\end{corollary}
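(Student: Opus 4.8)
The plan is to mirror the proof of the analogous statement for the $U$-stratification (the corollary following Proposition \ref{sigma closed immersion}), replacing the involution condition $p_2=\iota(p_3)$ by the collinearity condition $p_2+p_3=\iota(p_4)$. By Definition \ref{V_n definition}, $V_n$ is the open locus where $p_2+p_3\neq\iota(p_4)$, so its closed complement is exactly the locus $W=\{p_2+p_3=\iota(p_4)\}=\{p_4=\iota(p_2+p_3)\}$, and the decomposition $\mg{1,n}=V_n\sqcup W$ is automatic. The entire content is therefore to identify $W$ with $\im\tau_{n-1}$ and to see that $\tau_{n-1}$ realizes an isomorphism $V_{n-1}'\cong W$.

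First I would invoke the preceding Proposition, which states that $\tau_{n-1}$ is a closed immersion; this immediately gives that $\im\tau_{n-1}$ is a closed substack and that $\tau_{n-1}\colon V_{n-1}'\to\im\tau_{n-1}$ is an isomorphism. It then remains only to show $\im\tau_{n-1}=W$. The inclusion $\im\tau_{n-1}\subseteq W$ is immediate from the definition of $\tau_{n-1}$: its fourth marked point is $\iota(p_2+p_3)$, and applying the involution gives $\iota(\iota(p_2+p_3))=p_2+p_3$, so the image satisfies $p_2+p_3=\iota(p_4)$ and lies in $W$.

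For the reverse inclusion $W\subseteq\im\tau_{n-1}$ I would use the map $\pi_4\colon\mg{1,n}\to\mg{1,n-1}$ forgetting the fourth marked point as a retraction, exactly as $\pi_3$ was used in the $U$-case. Given a family in $W$, with $q_2+q_3=\iota(q_4)$, applying $\pi_4$ yields an $(n-1)$-pointed curve, and the key point is to check that this curve lies in $V_{n-1}'$. Since $\iota$ is injective and $q_2+q_3=\iota(q_4)$, any equality $q_2+q_3=\iota(q_j)$ would force $q_j=q_4$, which is excluded for $j\neq4$ because the marked points of a pointed curve are distinct; hence $q_2+q_3\neq\iota(q_j)$ for every remaining point, which is precisely the defining condition of $V_{n-1}'$. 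Applying $\tau_{n-1}$ then reinserts $\iota(q_2+q_3)=\iota(\iota(q_4))=q_4$ into the fourth slot and recovers the original family, so that $\pi_4\circ\tau_{n-1}=\id$ on $V_{n-1}'$ and $\tau_{n-1}\circ\pi_4=\id$ on $W$. This exhibits $W=\im\tau_{n-1}\cong V_{n-1}'$ and completes the stratification.

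I do not expect a genuine obstacle here, since the argument is essentially bookkeeping parallel to the $U$-case; the only step requiring care is the verification that $\pi_4$ lands in $V_{n-1}'$ rather than merely in $\mg{1,n-1}$, which is exactly where distinctness of the marked points and injectivity of the involution enter. As a sanity check one should also confirm, as in the $U$-case, that the conditions defining $V_{n-1}'$ are precisely those needed for $\iota(p_2+p_3)$ to be distinct from all the other marked points, so that $\tau_{n-1}$ is defined on all of $V_{n-1}'$ and the retraction genuinely identifies the two loci.
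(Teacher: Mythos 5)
Your argument is correct and is exactly the (omitted) argument the paper intends: the paper states this corollary without proof as an immediate consequence of $\tau_{n-1}$ being a closed immersion, just as in the $U$-case. Your verification that forgetting $p_4$ lands in $V_{n-1}'$ (via injectivity of $\iota$ and distinctness of marked points) is the right bookkeeping to make the identification $\im\tau_{n-1}=\{p_2+p_3=\iota(p_4)\}\cong V_{n-1}'$ precise.
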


\begin{proposition}
For $n=2,\dots,10$ the stacks $\mg{1,n}$ are rational. Moreover, for $n=4,\dots,10$ the open
in $\mg{1,n}$ which exhibits this rationality is $U_n\cap V_n$.
\end{proposition}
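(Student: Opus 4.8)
To prove rationality of $\mg{1,n}$ for $n=2,\dots,10$, I would exhibit an explicit open substack that is isomorphic to an open subscheme of affine space, and hence rational. The natural candidate, as the statement suggests, is $U_n\cap V_n$, and the key is that on this locus the automorphism group is trivial (for $n\geq 4$ no curve with a marked point-collinearity structure avoiding both the $U_n$ and $V_n$ conditions carries a nontrivial involution, by Observation \ref{automorphism observation} and Appendix \ref{autappendix}), so the stack is generically a scheme. First I would recall the Weierstrass description developed in Lemmas \ref{U3 vector bundle} and \ref{U2' vector bundle}: a pointed smooth elliptic curve is a choice of $(a,b)$ together with points $p_i=(x_i,y_i)$ on $y^2=x^3+ax+b$, modulo the $\mathbb G_m$-action of weights $(-4,-6)$ on $(a,b)$ and $(-2,-3)$ on $(x_i,y_i)$.

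\emph{Eliminating the Weierstrass coefficients.} The main mechanism is to use the marked points to solve for $a$ and $b$, exactly as in Lemma \ref{U3 vector bundle}. Given two marked points $p_2,p_3$ with $x_2\neq x_3$, the two equations $y_i^2=x_i^3+ax_i+b$ determine $a$ and $b$ rationally in terms of $(x_2,y_2,x_3,y_3)$; the condition $x_2\neq x_3$ is precisely the requirement that $p_2,p_3$ be neither equal nor involutions of one another, i.e. membership in $U_n$. After this elimination, the remaining data $(x_2,y_2),\dots,(x_n,y_n)$ vary in an open subset of $\mathbb A^{2(n-1)}$, cut out by the $\mathbb G_m$-action and by the open conditions $D\neq 0$, the $U_n$ condition $x_2\neq x_3$, and the $V_n$ condition $p_2+p_3\neq\iota(p_4)$ (for $n\geq 4$). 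The role of $V_n$ is to rigidify the $\mathbb G_m$-action: on this locus one can normalize the scaling using a coordinate that is forced to be nonzero, passing from the quotient stack to an honest open subscheme of affine space.

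\emph{Carrying out the rigidification.} Concretely, I would argue that on $U_n\cap V_n$ the $\mathbb G_m$-action is free with a section, so the quotient is representable and isomorphic to an open subscheme of $\mathbb A^{2(n-1)-1}$; since an open subscheme of affine space is rational, this proves rationality. For $n=2,3$ the loci $U_n\cap V_n$ may degenerate or the automorphism obstruction persists, so those cases should be handled separately: $n=2$ follows from Corollary \ref{m12 presentation} exhibiting $\mg{1,2}$ as open in a vector bundle over $B\mathbb G_m$ (whose coarse space is rational), and $n=3$ from the quotient presentation of $\mathcal U_3$ in Lemma \ref{U3 vector bundle}, where one again normalizes the $\mathbb G_m$-scaling against a nonvanishing coordinate.

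\emph{The main obstacle.} The hardest part is showing that $U_n\cap V_n$ is \emph{nonempty} and that the $\mathbb G_m$-action genuinely admits a rigidifying section there — that is, producing a $\mathbb G_m$-semi-invariant coordinate which is nowhere zero on this open locus and has the right weight to kill the scaling. One must check that the combination of the open conditions defining $U_n$ and $V_n$ (no two of the relevant points equal or involutions of one another, and $p_2+p_3\neq\iota(p_4)$) suffices to force some explicit weight-homogeneous function to be a unit, and simultaneously that the automorphism group is trivial so that no residual gerbe obstructs representability. Verifying this for every $n$ in the range $4\leq n\leq 10$ uniformly, rather than case by case, is the delicate bookkeeping step; I expect it reduces cleanly to the observation that the $V_n$ condition is exactly what breaks the last remaining symmetry left after the $U_n$ normalization.
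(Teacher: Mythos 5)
Your strategy breaks down at its central step. After using $p_2,p_3$ to solve for $a$ and $b$, the remaining marked points $p_4,\dots,p_n$ do \emph{not} vary in an open subset of $\mathbb A^{2(n-1)}$: each $p_i=(x_i,y_i)$ with $i\geq4$ must still satisfy $y_i^2=x_i^3+ax_i+b$, where $a$ and $b$ are now the rational functions of $(x_2,y_2,x_3,y_3)$ produced by the elimination. So the locus you are describing is (an open in) a codimension-$(n-3)$ complete intersection inside $\mathbb A^{2(n-1)}$ --- over the function field of the $(x_2,y_2,x_3,y_3)$-base it is a fiber product of copies of the elliptic curve itself, i.e.\ an intersection of quadrics in the variables $y_4,\dots,y_n$ --- and its rationality is precisely the content of the proposition, not a consequence of eliminating the Weierstrass coefficients. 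Only for $n=3$ does the elimination leave genuinely free coordinates, which is why Lemma \ref{U3 vector bundle} stops there. Your ``main obstacle'' paragraph worries about rigidifying the $\mathbb G_m$-action, but that is the easy part (on $U_n$ the semi-invariant $x_2-x_3$ of weight $-2$ is a unit, and the Appendix shows the residual involution acts freely on $U_n\cap V_n$ for $n\geq4$); the real difficulty, which the proposal never confronts, is the rationality of the underlying variety carrying the extra marked points. You have also misidentified the role of $V_n$: the non-collinearity of $p_2,p_3,p_4$ is not a device for normalizing the scaling but a genericity hypothesis on the configuration of points.

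The paper's proof takes an entirely different route: it invokes Belorousski's explicit bijective morphism from $U_n\cap V_n$ to an open subset of $\mathbb P^n$, and its only new content is the observation that, since Belorousski's bijectivity argument is functorial and works in families, the conclusion holds over an arbitrary field of characteristic not $2$ or $3$ --- whereas Belorousski's own deduction (a bijective morphism onto a normal variety is an isomorphism) fails in positive characteristic, as the Frobenius on $\mathbb P^1$ shows. If you want a self-contained coordinate proof, you would need to establish directly that the intersection of the $n-3$ quadrics above is rational for each $4\leq n\leq 10$, which is not routine and is not supplied by anything in your outline.
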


\begin{proof}
This was proven by Belorousski in the case where $\mathbbm k$ is algebraically closed and characteristic zero
in \cite{Bel98} by constructing a bijective morphism between $U_n\cap V_n$
and an open subset of $\mathbb P^n$. He concludes that it is an isomorphism since $\mathbb P^n$ is normal.
This proof does not work in arbitrary characteristic (for example the Frobenius morphism on $\mathbb P^1$
is a bijective
morphism between normal varieties which is not an isomorphism). However,
Belorousski's argument showing that the morphism is bijective is in fact functorial and works in families,
therefore directly establishing that the moduli stacks are isomorphic.
\end{proof}

\begin{proposition}\label{mg1n is tautological}
The Chow ring of $\mg{1,n}$ is tautological for $n=1,\dots,10$.
\end{proposition}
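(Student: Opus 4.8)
The plan is to induct on $n$, proving the equivalent statement that $\CH(\mg{1,n})$ coincides with its tautological subring $\mathcal R(\mg{1,n})$. The base cases $n=1,2,3$ are the explicit computations of Theorem \ref{n=1 Chow rings} and the $\mg{1,3}$ computation above, all manifestly generated by $\lambda_1$. For the inductive step I assume $\CH(\mg{1,m})$ is tautological for every $m<n$; by excision any open substack of $\mg{1,m}$ then has tautological Chow ring as well, since $\CH(\mg{1,m})$ surjects onto it. I will feed $\mg{1,n}$ to two excision sequences in turn: first the $\sigma$-stratification $\mg{1,n}=U_n\sqcup\im\sigma_{n-1}$, and, to handle the remaining open piece $U_n$, the $\tau$-stratification restricted to $U_n$, namely $U_n=(U_n\cap V_n)\sqcup(U_n\cap\im\tau_{n-1})$.

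The engine driving both steps is the following observation. Both $\sigma_{n-1}$ and $\tau_{n-1}$ are closed immersions of divisors (Proposition \ref{sigma closed immersion} and its $\tau$-analogue), and both pull $\lambda_1$ back to $\lambda_1$ since they preserve the underlying elliptic curve and hence the Hodge bundle (Corollary \ref{Hodge bundle pullback}). Thus for a tautological class $p(\lambda_1)$ on the source, the projection formula gives $\iota_*p(\lambda_1)=\iota_*\iota^*p(\lambda_1)=p(\lambda_1)\cdot[\im\iota]$, where $\iota$ is the relevant immersion. Because $[\im\iota]$ is a divisor class and the Picard group of $\mg{1,n}$, hence of each of its open substacks, is generated by $\lambda_1$ (Theorem \ref{mg1n Picard group} together with excision), the class $[\im\iota]$ is itself a multiple of $\lambda_1$, so $\iota_*p(\lambda_1)$ is again tautological. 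Hence whenever the source of $\sigma_{n-1}$ or $\tau_{n-1}$ has tautological Chow ring, the image of its pushforward is tautological. Combined with the standard fact that in an exact sequence $\CH(Z)\to\CH(X)\to\CH(U)\to0$ the ring $\CH(X)$ is tautological as soon as $\CH(U)$ is tautological and the image of $\CH(Z)\to\CH(X)$ is tautological (lift a tautological representative of a class in $\CH(U)$, subtract, and land in the image of the pushforward), this reduces everything to identifying the relevant open strata. Note that only right-exactness of excision is needed here, not the delicate left-exactness arguments of the earlier sections.

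The sources of the two pushforwards are unproblematic: $\im\sigma_{n-1}\cong U_{n-1}'$ and $U_n\cap\im\tau_{n-1}\cong\tau_{n-1}^{-1}(U_n)$ are open substacks of $\mg{1,n-1}$, hence have tautological Chow ring by the inductive hypothesis. The crux is therefore to show that the deepest open stratum $U_n\cap V_n$ has tautological Chow ring, and this is exactly where rationality enters. By the preceding Proposition on rationality, $U_n\cap V_n$ is isomorphic to an open subscheme of $\mathbb P^n$, so excision presents $\CH(U_n\cap V_n)$ as a ring quotient of $\CH(\mathbb P^n)=\mathbb Z[h]/(h^{n+1})$; in particular it is generated as a ring by the single degree-one class $h$. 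It remains to identify this degree-one generator with $\lambda_1$: the excision surjection $\CH^1(\mg{1,n})\to\CH^1(U_n\cap V_n)$ together with $\CH^1(\mg{1,n})=\Pic(\mg{1,n})=\langle\lambda_1\rangle$ (Theorem \ref{mg1n Picard group}, using $\CH^1=\Pic$ for a smooth stack) shows that $\CH^1(U_n\cap V_n)$ is generated by $\lambda_1$, so $h$ is a multiple of $\lambda_1$ and the whole ring is tautological.

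With $U_n\cap V_n$ in hand, the $\tau$-excision sequence shows $\CH(U_n)$ is tautological, and then the $\sigma$-excision sequence shows $\CH(\mg{1,n})$ is tautological, completing the induction. I expect the main obstacle to be precisely the $U_n\cap V_n$ step: the Chow ring of the open in $\mathbb P^n$ comes with the hyperplane class as its natural generator, and the real content is the Picard-group input of Theorem \ref{mg1n Picard group}, which lets us rewrite that generator in terms of $\lambda_1$ and thereby avoid circularity. Everything else is the bookkeeping of assembling two nested excision sequences, with the projection-formula observation ensuring that tautologicality is preserved under each pushforward.
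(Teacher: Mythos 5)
Your proof is correct and follows essentially the same route as the paper: stratify $\mg{1,n}$ into the rational open stratum $U_n\cap V_n$ and the two divisors $\im\sigma_{n-1}$ and $\im\tau_{n-1}$, handle the open piece via Belorousski's rationality and the divisors by induction on $n$. The paper's version is terser---it simply asserts that a stack ``inductively built out of tautological pieces'' is tautological---whereas you supply the justification it leaves implicit, namely that the pushforwards land in the tautological subring via the projection formula together with $\Pic(\mg{1,n})=\langle\lambda_1\rangle$, and that the hyperplane generator of $\CH(U_n\cap V_n)$ is a multiple of $\lambda_1$ for the same reason.
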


\begin{proof}
We have already shown this for $n=1,2,3$. For $n\geq 4$, stratify $\mg{1,n}$ into
$\mg{1,n}=(U_n\cap V_n)\sqcup\im\sigma_{n-1}\sqcup\im\tau_{n-1}$.
That is,
$\mg{1,n}$ is the union of the open locus where
$p_2$ and $p_3$ are not involutions and $p_2$, $p_3$, and $p_4$ are not colinear along with
the divisors where these conditions do hold. But $U_n\cap V_n$
is rational by the above Proposition
and hence generated in degree one, hence generated by $\lambda_1$, hence tautological.
Since $\im\sigma_{n-1}$ and $\im\tau_{n-1}$
are isomorphic to opens in $\mg{1,n-1}$, $\mg{1,n}$ is inductively built out of tautological pieces,
hence itself tautological. This breaks at $n=11$ since $U_{11}\cap V_{11}$ is not birational to an open in
$\mathbb P^{11}$ by \cite{Bel98}.
\end{proof}

\begin{theorem}
The integral Chow ring of $\mg{1,4}$ is given by
$$
\CH(\mg{1,4})=\frac{\mathbb Z[\lambda_1]}{(12\lambda_1,2\lambda_1^2)}.
$$
\end{theorem}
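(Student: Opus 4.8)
The plan is to assemble this statement directly from the two results that immediately precede it, since the substantive work has already been done. By definition the integral tautological ring $\mathcal R(\mg{1,4})$ is the subring of $\CH(\mg{1,4})$ generated by $\lambda_1$, so a priori I only have the inclusion $\mathcal R(\mg{1,4})\subseteq\CH(\mg{1,4})$. The entire task is to promote this inclusion to an equality and then read off the ring structure.

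First I would invoke Proposition \ref{mg1n is tautological}, which asserts precisely that $\CH(\mg{1,4})$ is tautological, i.e.\ that every class is a polynomial in $\lambda_1$. This supplies the reverse inclusion $\CH(\mg{1,4})\subseteq\mathcal R(\mg{1,4})$, and hence the equality $\CH(\mg{1,4})=\mathcal R(\mg{1,4})$ as rings. Then I would apply Proposition \ref{mg14 tautological ring}, which computes $\mathcal R(\mg{1,4})=\mathbb Z[\lambda_1]/(12\lambda_1,2\lambda_1^2)$. Combining the two identifications yields the claimed presentation $\CH(\mg{1,4})=\mathbb Z[\lambda_1]/(12\lambda_1,2\lambda_1^2)$.

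The main obstacle has, in effect, already been cleared before we reach this statement: it resides in Proposition \ref{mg1n is tautological}, whose proof rests on the rationality of $U_4\cap V_4$ (via Belorousski's functorial bijection with an open subset of $\mathbb P^4$, now valid in arbitrary characteristic) together with the stratification $\mg{1,4}=(U_4\cap V_4)\sqcup\im\sigma_3\sqcup\im\tau_3$ into pieces that are inductively tautological. The upper bound $\mathbb Z[\lambda_1]/(12\lambda_1,2\lambda_1^2)$ on the tautological ring, meanwhile, comes from the $2\lambda_1$ relation on $U_4$ and the $\Pic(\mg{1,4})=\mathbb Z/12$ of Theorem \ref{mg1n Picard group}, with the matching lower bound furnished in Proposition \ref{mg14 tautological ring} by the four-pointed curve with $\mu_2$-automorphisms. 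Given those inputs, the remaining step—identifying the full Chow ring with the already-computed tautological ring—is purely formal, so I expect no genuine difficulty in the final assembly.
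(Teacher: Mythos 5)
Your proposal matches the paper's proof exactly: the paper likewise combines Proposition \ref{mg1n is tautological} (that $\CH(\mg{1,4})$ is tautological) with Proposition \ref{mg14 tautological ring} (the computation $\mathcal R(\mg{1,4})=\mathbb Z[\lambda_1]/(12\lambda_1,2\lambda_1^2)$) to conclude. Your accounting of where the real work lives is also accurate.
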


\begin{proof}
Since $\mg{1,4}$ is tautological by the above Proposition, we have
$$
\CH(\mg{1,4})=\mathcal R(\mg{1,4})=\frac{\mathbb Z[\lambda_1]}{(12\lambda_1,2\lambda_1^2)}
$$
by Proposition \ref{mg14 tautological ring}.
\end{proof}

\begin{proposition}\label{V4 V4' Chow ring}
The Chow ring of $V_4$ and $V_4'$ is $\mathbb Z$.
\end{proposition}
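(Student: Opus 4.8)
The plan is to reduce everything to the single relation $\lambda_1 = 0$ on $V_4$, and to extract that relation from an explicit trivialization of a power of the Hodge bundle. The starting point is that $V_4$ and $V_4'$ are nonempty open substacks of $\mg{1,4}$, so the restriction maps $\CH(\mg{1,4}) \to \CH(V_4)$ and $\CH(\mg{1,4}) \to \CH(V_4')$ are surjective by right-exactness of the excision sequence. Since we have just computed $\CH(\mg{1,4}) = \mathbb{Z}[\lambda_1]/(12\lambda_1, 2\lambda_1^2)$, each of $\CH(V_4)$, $\CH(V_4')$ is a quotient ring of $\mathbb{Z}[\lambda_1]/(12\lambda_1, 2\lambda_1^2)$; in particular it is generated by $\lambda_1$ and satisfies $12\lambda_1 = 0$. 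Hence it suffices to prove $\lambda_1 = 0$ on each, since then the whole positive-degree part collapses and (using irreducibility of these opens for $\CH^0 = \mathbb{Z}$) the ring becomes $\mathbb{Z}$. Furthermore $V_4' \subseteq V_4$, because the defining inequality of $V_4'$ with $i=4$ is exactly the defining inequality of $V_4$; so it is enough to establish $\lambda_1 = 0$ on $V_4$ and then restrict.

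The key step is to produce the trivialization. Writing $\mg{1,4}$ as a quotient $[P/\mathbb{G}_m]$ in Weierstrass coordinates as in Lemma \ref{U3 vector bundle}, with $p_i = (x_i, y_i)$ and $\mathbb{G}_m$ acting with weight $-2$ on each $x_i$ and $-3$ on each $y_i$, I would consider the collinearity determinant
\[
\Lambda = \det\begin{pmatrix} x_2 & y_2 & 1 \\ x_3 & y_3 & 1 \\ x_4 & y_4 & 1 \end{pmatrix}.
\]
The claim is that $V_4 = \{\Lambda \neq 0\}$: three affine points of a Weierstrass cubic sum to zero under the group law exactly when they are collinear, so $p_2 + p_3 \neq \iota(p_4)$, i.e.\ $p_2 + p_3 + p_4 \neq 0$, is precisely the non-collinearity of $p_2, p_3, p_4$, which is the non-vanishing of $\Lambda$. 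Expanding the determinant, every surviving monomial has the shape $x_i y_j$, so $\Lambda$ is $\mathbb{G}_m$-homogeneous of weight $-5$; it therefore descends to a global section of the weight-$(-5)$ line bundle $\mathcal O(-5)$ on $[P/\mathbb{G}_m]$, whose first Chern class is $-5\lambda_1$ by Corollaries \ref{Hodge bundle generator} and \ref{Hodge bundle pullback}. Being nowhere vanishing on $V_4$, this section trivializes the bundle, yielding the relation $5\lambda_1 = 0$ in $\CH^1(V_4)$.

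Combining $5\lambda_1 = 0$ with the inherited relation $12\lambda_1 = 0$, and using $\gcd(5,12)=1$ via $\lambda_1 = 5\cdot(5\lambda_1) - 2\cdot(12\lambda_1)$, we get $\lambda_1 = 0$ in $\CH(V_4)$. Hence $\CH(V_4) = \mathbb{Z}$, and restricting along the open immersion $V_4' \hookrightarrow V_4$ gives $\CH(V_4') = \mathbb{Z}$ as well. (This is consistent with Observation \ref{automorphism observation}: $V_4$ is automorphism-free, so the vanishing of the Hodge class is exactly what one should expect.)

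The part I expect to require the most care — the only real obstacle in an otherwise formal argument — is the global validity of the identity $V_4 = \{\Lambda \neq 0\}$ together with the weight computation across the entire stratum, rather than merely on the chart where the $x_i$ are distinct. One must confirm that $\Lambda$ is a genuine global regular function on $P$ (which it is, being polynomial in the $x_i, y_i$), and that the group-law reformulation of collinearity persists on the degenerate configurations lying inside $V_4$, for instance those with $p_3 = \iota(p_2)$ where $x_2 = x_3$ yet $\Lambda$ may still be nonzero. Once this identification is secured, the non-collinearity locus automatically trivializes $\lambda_1^{\otimes 5}$, and the numerical coincidence $\gcd(5,12)=1$ finishes the proof.
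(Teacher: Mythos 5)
Your argument is correct, and it takes a genuinely different route from the paper's. The paper proceeds by excision for the closed stratum $\im\tau_3\cong V_3'$: since $\lambda_1^5=0$ on the automorphism-free locus containing $V_4$ (Observation \ref{automorphism observation}) while $\lambda_1^5\neq0$ in $\CH(\mg{1,4})$, the class $\lambda_1^5$ must lie in the image of ${\tau_3}_*$, and the projection formula then identifies ${\tau_3}_*(1)$ with an odd multiple of $\lambda_1$, after which the quotient collapses. You instead write down the equation of the complementary divisor explicitly: the collinearity determinant $\Lambda$ is a $\mathbb G_m$-semi-invariant of weight $-5$ on the Weierstrass presentation, so its nonvanishing on $V_4$ trivializes the corresponding line bundle and gives $5\lambda_1=0$, and $\gcd(5,12)=1$ finishes. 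The point you flag as delicate --- whether $V_4=\{\Lambda\neq0\}$ holds on all of $V_4$ --- is in fact unproblematic: in $\mg{1,4}$ the points $p_2,p_3,p_4$ are pairwise distinct and affine, a vertical line meets the affine Weierstrass cubic in at most two points, and tangency would force two marked points to coincide, so for the marked points the group-law condition $p_2+p_3+p_4=0$ is exactly $\Lambda=0$ (and the sign of the weight is immaterial, since only $5\lambda_1=0$ is used). What your approach buys is sharper information: it computes the divisor class $[\im\tau_3]=\pm5\lambda_1$ on the nose, whereas the paper's projection-formula step only pins ${\tau_3}_*(1)$ down to an odd multiple of $\lambda_1$ --- and the conclusion $\CH(V_4)=\mathbb Z$ really requires that multiple to be prime to $3$ as well, which your explicit $\pm5$ supplies. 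What the paper's approach buys is that it avoids any explicit equation and reuses only structural inputs already established. Both reach the right answer; yours is the more self-contained of the two.
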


\begin{proof}
The excision sequence gives
$$
\CH(V_3')\xrightarrow{{\tau_3}_*}\CH(\mg{1,4})\rightarrow\CH(V_4)\rightarrow0
$$
$$
\mathbb Z[\lambda_1]/(2\lambda_1)\rightarrow\frac{\mathbb Z[\lambda_1]}{(12\lambda_1,2\lambda_1^2)}
\rightarrow\CH(V_4)\rightarrow 0.
$$
Since by Observation \ref{automorphism observation}
$\lambda_1^5=0$ on $V_4$ and $\lambda_1^5\neq0$ on $\mg{1,4}$,
we see that $\lambda_1^5$ must be in the image of ${\tau_3}_*$.
Hence ${\tau_3}_*(\lambda_1^4)=\lambda_1^5$ in
$\CH(\mg{1,4})$. But we also have ${\tau_3}_*(\lambda_1^4)={\tau_3}_*(\tau_3^*(\lambda_1^4))=\lambda_1^4
{\tau_3}_*(1)$. Therefore we must have ${\tau_3}_*(1)=\lambda_1$, and so $\CH(V_4)=\mathbb Z$.
Since $V_4'\subseteq V_4$, we then also have $\CH(V_4')=\mathbb Z$.
\end{proof}

\begin{corollary}
The integral tautological ring of $V_n$ and $V_n'$ is $\mathbb Z$ for all $n\geq 4$.
\end{corollary}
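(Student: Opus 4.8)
The plan is to treat Proposition \ref{V4 V4' Chow ring} as a base case and then propagate the vanishing of $\lambda_1$ upward through the forgetful maps, in exact analogy with Corollary \ref{Un Un' tautological ring}. Since $\mathcal R(V_n)$ and $\mathcal R(V_n')$ are by definition the subrings generated by $\lambda_1$, it suffices to prove that $\lambda_1 = 0$ on each $V_n$ and each $V_n'$ for $n \geq 4$; once this is in hand, every class of positive degree in the tautological ring vanishes and the ring collapses to $\mathbb Z$. So the whole corollary reduces to establishing a single degree-one relation on each locus.

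For the base case I would simply read off Proposition \ref{V4 V4' Chow ring}: it gives $\CH(V_4) = \CH(V_4') = \mathbb Z$, concentrated in degree zero, so in particular $\lambda_1 = 0$ on both $V_4$ and $V_4'$. For the inductive step, I would note that the defining inequality $p_2 + p_3 \neq \iota(p_4)$ of $V_n$ involves only $p_2, p_3, p_4$, so for $n \geq 5$ the forgetful map $\pi$ carries $V_n$ into $V_{n-1}$, inducing a pullback $\pi^* : \CH(V_{n-1}) \to \CH(V_n)$ with $\pi^*(\lambda_1) = \lambda_1$. This is precisely the content of the Observation following Definition \ref{V_n definition} that $\pi$ pulls relations back to relations, and it lets the relation $\lambda_1 = 0$ propagate from $V_4$ to every $V_n$ with $n \geq 4$. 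Finally, since $V_n' \subseteq V_n$, restriction along this inclusion preserves $\lambda_1$ and hence forces $\lambda_1 = 0$ on $V_n'$ as well.

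I do not expect a genuine obstacle here; the argument is essentially bookkeeping atop Proposition \ref{V4 V4' Chow ring}. The only point requiring a moment of care is the set-theoretic containment $\pi(V_n) \subseteq V_{n-1}$ that legitimizes the pullback, but this is immediate because $\pi$ forgets $p_n$ while the inequality cutting out $V_n$ never mentions $p_n$. With $\lambda_1 = 0$ on both $V_n$ and $V_n'$ for all $n \geq 4$, the tautological rings $\mathcal R(V_n)$ and $\mathcal R(V_n')$ are generated over $\mathbb Z$ by a class that squares (indeed already equals) to zero, so both equal $\mathbb Z$, as claimed.
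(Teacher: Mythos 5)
Your proposal is correct and follows essentially the same route as the paper: both deduce $\lambda_1=0$ on $V_4$ and $V_4'$ from Proposition \ref{V4 V4' Chow ring} and then propagate this degree-one relation to all $n\geq 4$ via the forgetful maps (the paper's observation that $\pi$ pulls relations back to relations). The extra care you take in checking $\pi(V_n)\subseteq V_{n-1}$ and in using the inclusion $V_n'\subseteq V_n$ is sound but amounts to the same bookkeeping the paper leaves implicit.
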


\begin{proof}
Since $\CH(V_4)=\CH(V_4')=\mathbb Z$, we in particular have $\lambda_1=0$ on $V_4$ and $V_4'$, hence
on $V_n$ and $V_n'$ for all $n\geq 4$. Therefore $\mathcal R(V_n)=\mathcal R(V_n')=\mathbb Z$.
\end{proof}

\begin{proposition}\label{mg1n tautological ring}
For $n\geq5$, the integral tautological ring of $\mg{1,n}$ is
$$
\mathcal R(\mg{1,n})=\frac{\mathbb Z[\lambda_1]}{(12\lambda_1,\lambda_1^2)}.
$$
\end{proposition}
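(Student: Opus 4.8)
The plan is to run the excision sequence for the $V$-stratification $\mg{1,n}=V_n\sqcup\im\tau_{n-1}\cong V_n\sqcup V_{n-1}'$ in the reverse direction from the $\mg{1,4}$ computation of Proposition \ref{V4 V4' Chow ring}: there we knew $\CH(\mg{1,4})$ and extracted $\CH(V_4)$, whereas here I would feed in the already-established vanishing $\mathcal R(V_n)=\mathcal R(V_{n-1}')=\mathbb Z$ (valid for all indices $\geq 4$, hence available exactly once $n\geq 5$) to pin down $\mathcal R(\mg{1,n})$. Beyond the bound obtained earlier, that $\mathcal R(\mg{1,n})$ is a quotient of $\mathbb Z[\lambda_1]/(12\lambda_1,2\lambda_1^2)$, the only thing left to prove is the single relation $\lambda_1^2=0$; together with $\Pic(\mg{1,n})\cong\mathbb Z/12$ from Theorem \ref{mg1n Picard group} this forces the claimed presentation.

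To produce $\lambda_1^2=0$, I would first note that $\lambda_1$ restricts to $0$ on $V_n$, since $\mathcal R(V_n)=\mathbb Z$ carries no class in degree one. By exactness at the middle term of the excision sequence
$$
\CH(V_{n-1}')\xrightarrow{\tau_{(n-1)*}}\CH(\mg{1,n})\rightarrow\CH(V_n)\rightarrow0,
$$
the class $\lambda_1$ therefore lies in the image of $\tau_{(n-1)*}$; write $\lambda_1=\tau_{(n-1)*}(\beta)$. The projection formula then gives
$$
\lambda_1^2=\lambda_1\cdot\tau_{(n-1)*}(\beta)=\tau_{(n-1)*}(\tau_{n-1}^*\lambda_1\cdot\beta).
$$
The key point is that $\tau_{n-1}$ leaves the underlying curve untouched, so it pulls the Hodge bundle back to the Hodge bundle and $\tau_{n-1}^*\lambda_1=\lambda_1|_{V_{n-1}'}$; this vanishes because $\mathcal R(V_{n-1}')=\mathbb Z$, which is precisely where $n-1\geq 4$, i.e. $n\geq 5$, enters. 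Hence $\lambda_1^2=\tau_{(n-1)*}(0)=0$.

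I expect the main obstacle to be bookkeeping rather than anything deep: one must confirm that the excision sequence really is exact at its middle term (not merely right-exact onto $\CH(V_n)$), and that $\beta$ and its pushforward sit in the codimensions that make the projection-formula step legitimate on the nose. It is also worth recording why the argument must collapse at $n=4$: there $V_{n-1}'=V_3'$, whose tautological ring is not known to be $\mathbb Z$, so $\tau_3^*\lambda_1$ need not vanish, consistent with the genuinely nonzero $\lambda_1^2$ in $\mathcal R(\mg{1,4})$. Once $\lambda_1^2=0$ is in hand I would finish as in the lower cases: the earlier bound gives one inclusion, while $\lambda_1$ generating $\Pic(\mg{1,n})\cong\mathbb Z/12$ shows it has order exactly $12$ in degree one, giving the reverse inclusion, so $\mathcal R(\mg{1,n})=\mathbb Z[\lambda_1]/(12\lambda_1,\lambda_1^2)$.
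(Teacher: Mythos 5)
Your proposal is correct and follows essentially the same route as the paper: the excision sequence for $\mg{1,n}=V_n\sqcup\im\tau_{n-1}$, the vanishing of $\mathcal R(V_n)$ and $\mathcal R(V_{n-1}')$ for $n\geq 5$, and $\Pic(\mg{1,n})\cong\mathbb Z/12$ to fix the degree-one part. The only difference is that you make explicit the projection-formula computation $\lambda_1^2=\tau_{(n-1)*}(\tau_{n-1}^*\lambda_1\cdot\beta)=0$ that the paper compresses into the statement that the tautological ring is concentrated in degrees $0$ and $1$.
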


\begin{proof}
The excision sequence gives
$$
\mathcal R(V_{n-1}')\rightarrow\mathcal R(\mg{1,n})\rightarrow\mathcal R(V_n)\rightarrow0
$$
$$
\mathbb Z\rightarrow\mathcal R(\mg{1,n})\rightarrow\mathbb Z\rightarrow 0.
$$
Since $\mathcal R(V_n)$ is $\mathbb Z$ and we are patching in the divisor $\im\tau_{n-1}\cong V_{n-1}'$,
whose integral tautological ring is $\mathbb Z$, we see that the integral tautological ring
of $\mg{1,n}$ is concentrated in degrees 0 and 1. Therefore
$\mathcal R(\mg{1,n})=\mathbb Z[\lambda_1]/(12\lambda_1,\lambda_1^2)$.
\end{proof}

\begin{theorem}
For $5\leq n\leq10$, the integral Chow ring of $\mg{1,n}$ is
$$
\CH(\mg{1,n})=\frac{\mathbb Z[\lambda_1]}{(12\lambda_1,\lambda_1^2)}.
$$
\end{theorem}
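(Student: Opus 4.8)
The plan is to combine the two structural results already in hand, exactly as was done for the $\mg{1,4}$ theorem. By Proposition \ref{mg1n is tautological} the Chow ring of $\mg{1,n}$ is tautological for every $n$ in the range $1,\dots,10$, so in particular $\CH(\mg{1,n})=\mathcal R(\mg{1,n})$ for $5\leq n\leq10$. Meanwhile Proposition \ref{mg1n tautological ring} identifies the integral tautological ring as $\mathbb Z[\lambda_1]/(12\lambda_1,\lambda_1^2)$ for all $n\geq5$. First I would simply substitute the second identity into the first to read off $\CH(\mg{1,n})=\mathbb Z[\lambda_1]/(12\lambda_1,\lambda_1^2)$, which is the desired conclusion; this mirrors the one-line deduction used for $\mg{1,4}$, with $\mathcal R(\mg{1,n})$ now supplied by Proposition \ref{mg1n tautological ring} in place of Proposition \ref{mg14 tautological ring}.

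It is worth unpacking why nothing further is required. The combinatorial content of the problem---patching the stratification $\mg{1,n}=(U_n\cap V_n)\sqcup\im\sigma_{n-1}\sqcup\im\tau_{n-1}$ and bounding the tautological ring through the excision sequences for $\im\sigma_{n-1}\cong U_{n-1}'$ and $\im\tau_{n-1}\cong V_{n-1}'$---has already been carried out in Proposition \ref{mg1n tautological ring}, which delivers the ring abstractly as a quotient of $\mathbb Z[\lambda_1]$. The only extra ingredient needed to upgrade from the tautological subring to the full Chow ring is that $\CH(\mg{1,n})$ is generated by $\lambda_1$, and that is precisely the content of Proposition \ref{mg1n is tautological}. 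So no new excision sequence or gerbe computation needs to be run for this theorem.

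The step that carries all the weight, and the only place where the upper bound $n\leq10$ is genuinely used, is the rationality of the open stratum $U_n\cap V_n$: once that stratum is rational it is generated in degree one, hence by $\lambda_1$, and the inductive structure of the stratification propagates this to the whole stack. The hard part is therefore not in this final statement at all but in the prior proposition establishing that $U_n\cap V_n$ is birational to an open subset of $\mathbb P^n$ via Belorousski's construction; for $n\geq11$ that birationality fails, which is exactly why the argument---and the theorem---stops at $n=10$.
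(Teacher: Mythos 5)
Your proposal is correct and matches the paper's own proof exactly: the theorem follows immediately by combining Proposition \ref{mg1n is tautological} (the Chow ring is tautological for $n\leq 10$) with Proposition \ref{mg1n tautological ring} (the tautological ring is $\mathbb Z[\lambda_1]/(12\lambda_1,\lambda_1^2)$ for $n\geq 5$). Your additional remarks about where the bound $n\leq 10$ enters are accurate but not needed for the deduction itself.
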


\begin{proof}
The Chow ring of $\mg{1,n}$ is tautological for $5\leq n\leq 10$ by Proposition \ref{mg1n is tautological} and
the tautological ring was computed in the above Proposition.
\end{proof}

\newpage

\appendix
\section{Automorphisms of marked elliptic curves}
\label{autappendix}

In this Appendix we note the following facts about automorphisms of marked elliptic curves.

\begin{proposition}
Over a field $\mathbbm k$ of characteristic not equal to 2 or 3, there exists:
\begin{itemize}
\item
one-pointed elliptic curves with automorphism groups $\mu_2,\mu_4,$ and $\mu_6$;
\item two-pointed
elliptic curves with automorphism groups $\mu_2, \mu_3$, and $\mu_4$;
\item three-pointed elliptic curves
with automorphism groups $\mu_2$ and $\mu_3$;
\item and four-pointed elliptic curves with automorphism
group $\mu_2$.
\end{itemize}
Every four-pointed elliptic curve with $\mu_2$ automorphisms has $p_2,p_3,p_4$
colinear, and every $n$-pointed elliptic curve with $n\geq5$ has no (non-trivial) automorphisms.
\end{proposition}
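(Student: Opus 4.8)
The plan is to exploit the Weierstrass form throughout. First I would pin down the automorphism group scheme of a one-pointed curve $(C_{(a,b)},\infty)$: by the Weierstrass theorem an automorphism fixing $\infty$ has the shape $[x:y:z]\mapsto[t^{-2}x:t^{-3}y:z]$ for some $t$ satisfying $t^{-4}a=a$ and $t^{-6}b=b$. Solving these two equations yields the familiar trichotomy — $\mu_2$ for generic $(a,b)$, $\mu_4$ when $b=0$ (e.g. $C_{(1,0)}$), and $\mu_6$ when $a=0$ (e.g. $C_{(0,1)}$) — and this is the complete list. Phrasing everything in terms of group schemes rather than $\mathbbm k$-points sidesteps the question of whether $\mathbbm k$ contains the relevant roots of unity.

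Next I would translate the marked-point problem into a fixed-point computation. The automorphism attached to $t$ sends an affine point $(x_0,y_0)$ to $(t^{-2}x_0,t^{-3}y_0)$, and the automorphism group of an $n$-pointed curve is exactly the subgroup scheme of $\mathrm{Aut}(C_{(a,b)},\infty)$ fixing each of $p_2,\dots,p_n$. The heart of the argument is then to record the fixed locus of each nontrivial automorphism: the involution $\iota$ ($t=-1$) fixes $\infty$ together with the three $2$-torsion points, i.e. the three points on the line $y=0$; an order-$3$ element (present only when $a=0$) fixes $\infty$ and $(0,\pm\sqrt b)$; an order-$4$ element fixes only $\infty$ and $(0,0)$; and an order-$6$ element fixes only $\infty$. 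Smoothness, $D=4a^3+27b^2\neq 0$, guarantees the three $2$-torsion points are distinct.

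From this table the existence statements and the two concluding sentences both drop out. For existence I would exhibit explicit curves: $\mu_4$ from $(C_{(1,0)},\infty,(0,0))$, $\mu_3$ from $(C_{(0,1)},\infty,(0,1),(0,-1))$, and $\mu_2$ from any curve marked at a $2$-torsion point, with the four-pointed $\mu_2$ case realized by marking all three $2$-torsion points. For the classification, the key observation is that the maximal number of \emph{affine} fixed points of any nontrivial automorphism is three, attained only by the involution, whose affine fixed points are precisely the colinear $2$-torsion points. A nontrivial automorphism of a four-pointed curve must fix $p_2,p_3,p_4$, so it can only be the involution and these three points must be the $2$-torsion points — hence colinear; and a nontrivial automorphism of an $n$-pointed curve with $n\geq 5$ would have to fix at least four distinct affine points, which is impossible.

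The step I expect to require the most care is verifying that adding the marked points genuinely cuts the group scheme down to the claimed size, rather than merely checking that the advertised automorphisms exist. In particular, on the special curves $C_{(1,0)}$ and $C_{(0,1)}$ the extra order-$4$, order-$3$, and order-$6$ automorphisms \emph{permute} the $2$-torsion points rather than fixing them individually, so marking all three $2$-torsion points always collapses the group to $\mu_2$ regardless of the underlying $j$-invariant; one must work out this permutation action explicitly to exclude a larger group in the four-pointed case.
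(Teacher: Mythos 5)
Your proposal is correct and follows essentially the same route as the paper: reduce to the Weierstrass form, identify $\mathrm{Aut}(C,\infty)$ as $\mu_2$, $\mu_4$, or $\mu_6$ via the weight-$(4,6)$ action on $(a,b)$, then compute the affine fixed locus of each nontrivial $t$ and count (at most three affine fixed points, attained only by the involution on the line $y=0$, which gives both the colinearity claim and the vanishing of automorphisms for $n\geq 5$). Your closing caveat about the order-$4$ and order-$6$ elements permuting rather than fixing the $2$-torsion points is exactly the point the paper's fixed-locus computation settles, so it is a refinement of emphasis rather than a different argument.
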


\begin{proof}

Recall the Weierstrass form for elliptic curves:

\begin{theorem}[Weierstrass]
Any one-pointed smooth elliptic curve over a field $\mathbbm k$ of characteristic not equal to 2 or 3
can be written in the form $y^2z=x^3+axz^2+bz^3$,
where the marked point is the point at infinity $[0:1:0]$. Moreover, if we denote such a curve by
$C_{(a,b)}$, then $C_{(a,b)}\cong C_{(a',b')}$ if and only if $(a',b')=(t^4a,t^6b)$. The isomorphism
between these curves is given by $[x:y:z]\mapsto[t^2x:t^3y:z]$. Lastly, an elliptic curve
is smooth if and only if $D=4a^3+27b^2=0$, nodal if and only if $D=0$ and $(a,b)\neq(0,0)$, and
cuspidal if and only if $(a,b)=(0,0)$.
\end{theorem}

From this we see that an elliptic curve with $n$ marked points over $\mathbbm k$ is determined by
a choice of $(a,b)$ and $p_2,\dots,p_n$, $p_i=(x_i,y_i)$, and that the automorphisms of this
curve are given by the $t\in\mathbb G_m$ such that $t\cdot(a,b)=(t^4a,t^6b)=(a,b)$
and $t\cdot p_i=(t^2x_i,t^3y_i)=(x_i,y_i)$.

Now for each $m>1$ let $\zeta_m$ denote a primitive $m^{\text{th}}$ root of unity.
From $(t^4a,t^6b)=(a,b)$ we see that the automorphism group
of every one-pointed elliptic curve contains a copy of $\mu_2$ corresponding
to $t=\zeta_2=-1$, the involution. Additionally, the curves $C_{(1,0)}$ and $C_{(0,1)}$ are fixed by
$\mu_4=\left<\zeta_4\right>$ and $\mu_6=\left<\zeta_6\right>$.
Since any automorphism of an $n$-pointed elliptic elliptic curve $(C,p_1,\dots,p_n)$ is in particular
an automorphism of $(C,p_1)$, they must all correspond to elements of $\mu_2, \mu_4$, or $\mu_6$.

The element $\zeta_2$ is an automorphism of every elliptic curve and
induces the map $\zeta_2:[x:y:z]\mapsto[x:-y:z]$, and so for
a point $p_i\neq\infty$ to be fixed by this we must have $p_i=[x:0:1]$. Then we have
$$
y^2=x^3+ax+b
$$
$$
0=x^3+ax+b,
$$
which has at most three solutions. Therefore the involution $\iota=\zeta_2$ fixes at most four points in total.
An example of a four-pointed elliptic curve with automorphism group $\mu_2$ is
$(C_{(-1,0)}, \infty, [1:0:1], [0:0:1], [-1:0:1])$. Notice that any four-pointed elliptic curve
fixed by the involution must have $p_2,p_3,p_4$ colinear, as each point lies on the line $y=0$.

The element $\zeta_4$ is an automorphism of the curve corresponding to $(1,0)$ and induces the map
$\zeta_4:[x:y:z]\mapsto[-x:\zeta_4^3y:z]$, and so for a point $p_i\neq\infty$ to be fixed by this we must have
$p_i=[0:0:1]$, which is indeed a point on the curve $C_{(1,0)}$. Therefore there is exactly one two-pointed
elliptic curve with automorphism group $\mu_4$, the curve
$(C_{(1,0)},\infty,[0:0:1])$.

The element $\zeta_6$ is an automorphism of the curve corresponding to $(0,1)$ and induces the map
$\zeta_6:[x:y:z]\mapsto[\zeta_3x:-y:z]$, and so for a point $p_i\neq\infty$ to be fixed by this we must have
$p_i=[0:0:1]$, which is \textit{not} a point on the curve $C_{(0,1)}$. Therefore there is no
$n$-pointed elliptic curve with automorphism group $\mu_6$ for $n\geq 2$.

Lastly, the element $\zeta_6^2=\zeta_3$ is an automorphism of the curve corresponding to
$(0,1)$ and induces the map $\zeta_3:[x:y:z]\mapsto[\zeta_3^2x:y:z]$, and so for a point
$p_i\neq\infty$ to be fixed by this we must have
$p_i=[0:y:z]$. Then we have
$$
y^2=x^3+ax+b
$$
$$
y^2=1.
$$
Therefore an example of a three pointed elliptic curve with automorphism group $\mu_3$ is
$(C_{(0,1)},\infty,[0:1:1],[0:-1:1])$.

This exhausts all possible automorphisms, and so there are no $n$-pointed elliptic curves with non-trivial
automorphisms for $n\geq 5$.
\end{proof}

\newpage
\bibliographystyle{alpha}
\bibliography{/Users/martinbishop/documents/research/citations/bibliography.bib}

\end{document}